\begin{document}
\theoremstyle{plain}
\newtheorem{thm}{Theorem}[section]
\newtheorem{theorem}[thm]{Theorem}
\newtheorem{lemma}[thm]{Lemma}
\newtheorem{corollary}[thm]{Corollary}
\newtheorem{corollary*}[thm]{Corollary*}
\newtheorem{proposition}[thm]{Proposition}
\newtheorem{proposition*}[thm]{Proposition*}
\newtheorem{conjecture}[thm]{Conjecture}
\theoremstyle{definition}
\newtheorem{construction}{Construction}
\newtheorem{notations}[thm]{Notations}
\newtheorem{question}[thm]{Question}
\newtheorem{problem}[thm]{Problem}
\newtheorem{remark}[thm]{Remark}
\newtheorem{remarks}[thm]{Remarks}
\newtheorem{definition}[thm]{Definition}
\newtheorem{claim}[thm]{Claim}
\newtheorem{assumption}[thm]{Assumption}
\newtheorem{assumptions}[thm]{Assumptions}
\newtheorem{properties}[thm]{Properties}
\newtheorem{example}[thm]{Example}
\newtheorem{comments}[thm]{Comments}
\newtheorem{blank}[thm]{}
\newtheorem{observation}[thm]{Observation}
\newtheorem{defn-thm}[thm]{Definition-Theorem}

\newcommand{\sM}{{\mathcal M}}


\title{Topological strings, quiver varieties and Rogers-Ramanujan identities}
        \author{Shengmao Zhu}
       \address{Center of Mathematical Sciences, Zhejiang University, Hangzhou, Zhejiang 310027, China}
        \email{szhu@zju.edu.cn}
\maketitle

\begin{abstract} Motivated by some recent works on BPS invariants of
open strings/knot invariants, we guess there may be a general
correspondence between the Ooguri-Vafa invariants of toric
Calabi-Yau 3-folds and cohomologies of Nakajima quiver varieties. In
this short note, we provide a toy model to explain this
correspondence. More precisely, we study the topological open string
model of $\mathbb{C}^3$ with one Aganagic-Vafa brane
$\mathcal{D}_\tau$, and we show that, when $\tau\leq 0$, its
Ooguri-Vafa invariants are given by the Betti numbers of certain
quiver variety. Moreover, the existence of Ooguri-Vafa invariants
implies an infinite product formula. In particular, we find that the
$\tau=1$ case of such infinite product formula is closely related to
the celebrated Rogers-Ramanujan identities.
\\ \noindent{\bf Keywords}: Topological strings, Ooguri-Vafa
invariants, quiver varieties, Rogers-Ramanujan identities.
\\ \noindent{\bf MSC classes}: 14N35; 14N10; 11P84; 05E05.
\end{abstract}

\section{Introduction}
Topological string theory is the topological sector of  superstring
theory \cite{Witten}. In mathematics, we use Gromov-Witten theory to
describe the topological string theory, see \cite{HKKPTVVZ} for a
review. Topological string amplitude is the generating function of
Gromov-Witten invariants which are usually rational numbers
according to their definitions \cite{BF,LT}. In 1998, Gopakumar and
Vafa \cite{GV1} found that topological string amplitude is also the
generating function of a series of integer-valued invariants related
to BPS counting in M-theory. Later, Ooguri and Vafa \cite{OV}
extended the above result to open string case, we name the
corresponding integer-valued invariants as OV invariants.
Furthermore, the OV invariants are further refined by Labasitida,
Mari\~no and Vafa in \cite{LMV}, the resulted invariants are called
LMOV invariants \cite{LP}, which have been studied by many
literatures, see \cite{LZ,MMMS} for the recent approaches.

A central question in topological string theory is how to define the
GV/OV/LMOV invariants directly.  There have been many works, for
examples \cite{HST,KiL,PT,IP,MT}, devoted to the definition of GV
invariants. However, to the author's knowledge, no direct related
works study the definition of OV/LMOV invariants. But there are some
attempts to explain the integrality of OV invariants through
different mathematical models. In \cite{KS}, Kucharski and Sulkowski
related the OV invariants to the combinatorics on words. In the
joint work with W. Luo \cite{LZ}, we investigated the LMOV
invariants for resolved conifold which is the large $N$ duality of
the framed unknot \cite{MV}. Moreover, we found that the (reduced)
topological string partition function of $\mathbb{C}^3$ is
equivalent to the Hilbert-Poincare polynomial of certain
cohomological Hall algebra of quiver. Very recently, a series of
works due to D.-E. Diaconescu et al \cite{CDDP,DDP,Dia} showed that
the (refined) GV invariants can be expressed in terms of the Betti
numbers of certain character varieties of algebraic curves based on
the main conjectures in \cite{CHM,HMW}. By the analogues of quiver
varieties and character varieties showed in \cite{HLRV1}, it is
natural to expect there will be an explanation of the integrality of
GV/OV invariants by using quiver varieties. It is also expected that
a general toric Calabi-Yau/quiver variety correspondence may exist
in geometry.
\subsection{Open string model on $(\mathbb{C}^3,\mathcal{D}_\tau)$}
In this short note, we provide a toy model to state this
correspondence through numerical calculations.  More precisely, we
focus on the open topological string on
$(\mathbb{C}^3,\mathcal{D}_\tau)$, where $\mathcal{D}_\tau$ is the
framing $\tau\in \mathbb{Z}$ Aganagic-Vafa A-brane \cite{AV,AKV}.
Its (reduced) topological string partition is given by
\begin{align}
Z^{(\mathbb{C}^3,\mathcal{D}_\tau)}(g_s,\mathbf{x}=(x,0,0,..))=\sum_{n\geq
0}\frac{(-1)^{n(\tau-1)}q^{\frac{n(n-1)}{2}\tau+\frac{n^2}{2}}}{(1-q)(1-q^2)\cdots(1-q^n)}x^n.
\end{align}
We define
\begin{align} \label{partionC3in}
f_{n}^{\tau}(q)=(q^{1/2}-q^{-1/2})[x^n]\text{Log}\left(\sum_{n\geq
0}\frac{(-1)^{n(\tau-1)}q^{\frac{n(n-1)}{2}\tau+\frac{n^2}{2}}}{(1-q)(1-q^2)\cdots(1-q^n)}x^n\right),
\end{align}
where $[x^n]g(x)$ denotes the coefficient of $x^n$ in the series
$g(x)\in \mathbb{Z}[[x]]$ and Log is the plethystic logarithm
introduced in Section \ref{subsection-plethystic}. Applying the work
of Ooguri and Vafa \cite{OV} to this open string model
$(\mathbb{C}^3,\mathcal{D}_\tau)$, we formulate the following
conjecture

\begin{conjecture} \label{Mainconj}
For any $\tau\in \mathbb{Z}$, for a fixed integer $m\geq 1$, we have
\begin{align} \label{integralityC3in}
f_{m}^{\tau}(q)=\sum_{k\in
\mathbb{Z}}N_{m,k}(\tau)q^{\frac{k}{2}}\in \mathbb{Z}[q^{\pm
\frac{1}{2}}].
\end{align}
In other words, for a fixed integer $m\geq 1$, there are only
finitely many $k$, such that the integers $N_{m,k}(\tau)$ are
nonzero.
\end{conjecture}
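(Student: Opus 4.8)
The plan is to reinterpret $f_m^{\tau}$ as a single rational function of $q^{1/2}$ and to reduce the conjecture to a statement about where its poles can lie. Write $Z^{\tau}(x,q)=\sum_{n\ge0}c_n(q)\,x^n$ with
\begin{align}
c_n(q)=(-1)^{n(\tau-1)}\,\frac{q^{\frac{n(n-1)}{2}\tau+\frac{n^2}{2}}}{(q;q)_n},\qquad (q;q)_n=\prod_{j=1}^n(1-q^j).
\end{align}
Since $1/(q;q)_n$ is the integer-coefficient generating series of partitions into parts at most $n$, each $c_n$ lies in $\mathbb{Z}((q^{1/2}))$ and is a rational function of $q^{1/2}$ whose only poles are at roots of unity. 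Unwinding the plethystic logarithm by M\"obius inversion gives
\begin{align}
[x^m]\mathrm{Log}(Z^{\tau})=\sum_{d\mid m}\frac{\mu(d)}{d}\,\psi^{d}\!\big(L_{m/d}\big),\qquad L_n:=[x^n]\log Z^{\tau},
\end{align}
where $\psi^{d}$ is the Adams operation $q^{1/2}\mapsto q^{d/2}$ and $\log$ is the ordinary logarithm. As $L_n$ is the degree-$n$ part of $\sum_{\ell\ge1}\frac{(-1)^{\ell-1}}{\ell}\big(\sum_j c_j x^j\big)^{\ell}$, it is a polynomial in $c_1,\dots,c_n$, so $f_m^{\tau}=(q^{1/2}-q^{-1/2})\sum_{d\mid m}\frac{\mu(d)}{d}\psi^{d}(L_{m/d})$ is a rational function of $q^{1/2}$ whose poles can sit only at roots of unity (together with $0$ and $\infty$). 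Because a one-variable rational function regular away from $0$ and $\infty$ is a Laurent polynomial, the conjecture is equivalent to the conjunction of (i) every root-of-unity pole of $f_m^{\tau}$ cancels, and (ii) the Laurent expansion of $f_m^{\tau}$ at $q^{1/2}=0$ has integer coefficients.

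For (ii) I would run the standard necklace/Witt congruence argument. Each $c_n$ lies in $\mathbb{Z}((q^{1/2}))$, so the only source of non-integrality in the $q^{1/2}$-expansion is the rational coefficients $1/\ell$ in $\log$ and $1/d$ in the M\"obius sum. For any prime $p$ and any $G\in\mathbb{Z}((q^{1/2}))$ one has the Frobenius congruence $\psi^{p}(G)\equiv G^{p}\pmod p$, since $a^p\equiv a\pmod p$ and $(q^{s/2})^p=\psi^{p}(q^{s/2})$. Feeding this into the exponential/logarithm identity relating $Z^{\tau}$ and $\mathrm{Log}(Z^{\tau})$ shows, prime by prime, that the denominators introduced by $1/\ell$ are exactly matched by the M\"obius weights, so that $(q^{1/2}-q^{-1/2})[x^m]\mathrm{Log}(Z^{\tau})\in\mathbb{Z}((q^{1/2}))$; the single factor $q^{1/2}-q^{-1/2}$ clears the residual $\ell=d=1$ denominator. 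This step is routine and insensitive to the value of $\tau$.

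The real difficulty is (i), the cancellation of the root-of-unity poles — equivalently, the assertion that the infinite tail coming from the denominators $(q;q)_n$ truncates after the plethystic logarithm. For $\tau\le0$ I would deduce it from geometry: under the correspondence described in the abstract, $f_m^{\tau}$ is the Poincar\'e polynomial of the finite-dimensional cohomology of the relevant Nakajima quiver variety, which has no poles and non-negative integer coefficients, settling (i) and (ii) at once. For $\tau\ge1$ the quiver-variety model is unavailable, and this is where the essential work lies. Here I would exploit the $q$-difference equation
\begin{align}
Z^{\tau}(x)-Z^{\tau}(qx)=(-1)^{\tau-1}q^{1/2}x\,Z^{\tau}(q^{\tau+1}x),
\end{align}
which follows at once from the recursion $(1-q^n)c_n=(-1)^{\tau-1}q^{\tau(n-1)+n-1/2}c_{n-1}$, to set up an induction in $m$ bounding the order of the poles of $L_m$ at each root of unity, and to attempt a reduction of $\tau\ge1$ to the established case $\tau\le0$ through the framing transformation relating different $\tau$. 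The hard part is proving that the residues at each primitive $N$-th root of unity cancel across the M\"obius sum: this is governed by the same cyclotomic mechanism as the integrality in (ii), but applied to the polar parts rather than the Taylor coefficients, and making this cancellation precise for all $\tau\ge1$ is the main obstacle.
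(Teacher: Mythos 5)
The statement you are addressing is, in the paper, a genuine conjecture: the author proves it only for $\tau\le 0$ (Theorem \ref{mainthm}), and leaves $\tau\ge 1$ open, tying it instead to Rogers--Ramanujan-type identities. Your reduction is sound as far as it goes: each $c_n$ is a rational function of $q^{1/2}$ whose finite poles lie at roots of unity; the plethystic logarithm preserves coefficients in $\mathbb{Z}((q^{1/2}))$ by the standard Frobenius congruence $\psi^p(G)\equiv G^p\pmod p$; hence the conjecture is equivalent to the cancellation of all root-of-unity poles of $f_m^{\tau}$. (One small slip: the factor $q^{1/2}-q^{-1/2}$ does not ``clear the residual $\ell=d=1$ denominator''---that term has denominator $1$; the prefactor is structural and merely preserves integrality.)

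The gap is that you never actually establish the pole cancellation for any value of $\tau$. For $\tau\le 0$ you appeal to ``the correspondence described in the abstract,'' but that correspondence \emph{is} the entire content of the paper's proof: one must match the series $\sum_{n}\mathcal{H}_{(n)}(q;\tau)x^n$ with the Hausel--Letellier--Rodriguez-Villegas generating function (Proposition 3.4 of \cite{HLRV2}) for the one-vertex quiver with $k=1-\tau$ infinite legs, via $x=q^{1/2}T$ and the computation $\gamma(\mathbf{v}(\mathbf{k}))=(2-k)n^2$, $\delta(\mathbf{v}(\mathbf{k}))=kn$, after which $f_n^{\tau}$ becomes $\pm\sum_j\dim\bigl(H_c^{2j}(\mathcal{Q}_{\tilde{n}(1-\tau)};\mathbb{C})^{S_n}\bigr)q^{\frac{1-n}{2}+d_{\tilde{n}(1-\tau)}-j}$, manifestly a finite integer Laurent polynomial. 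That identification requires $k=1-\tau\ge 1$, which is precisely why the geometric route is unavailable for $\tau\ge 1$; your proposed ``framing transformation relating different $\tau$'' has no known implementation, and the $q$-difference equation you write down (which is correct) only controls pole orders, not the residue cancellations across the M\"obius sum. Since you yourself identify this as the main obstacle, what you have is a correct reduction plus a program rather than a proof: it leaves $\tau\ge1$ open exactly as the paper does, and it stops short of the paper's actual argument in the one regime ($\tau\le 0$) where a proof exists.
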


The rest of this paper is devoted to study the Conjecture
\ref{Mainconj}. We start with the $\tau=0$ case for warming up.
Recall the classical Cauchy identity for Schur functions \cite{Mac},
\begin{align} \label{cauchyidentity}
\sum_{\lambda\in
\mathcal{P}}s_{\lambda}(\mathbf{y})s_{\lambda}(\mathbf{x})=\prod_{i,j\geq
1}\frac{1}{(1-x_iy_j)},
\end{align}
where $\mathbf{x}=(x_1,x_2,...)$, $\mathbf{y}=(y_1,y_2,...)$ and
$\mathcal{P}$ denotes the set of all the partitions. We consider the
specialization $\mathbf{x}=(x,0,0,....)$ and
$\mathbf{y}=q^{\rho}=(q^{-1/2},q^{-3/2},q^{-5/2},...)$, the left
hand side of (\ref{cauchyidentity}) becomes
\begin{align}
\sum_{\lambda\in
\mathcal{P}}s_{\lambda}(q^\rho)s_{\lambda}(\mathbf{x}=(x,0,0,..))=\sum_{n\geq
0}s_n(q^{\rho})x^n=\sum_{n\geq
0}\frac{(-1)^{n}q^{\frac{n^2}{2}}}{(1-q)(1-q^2)\cdots(1-q^n)}x^n,
\end{align}
and the right hand side of (\ref{cauchyidentity}) gives
\begin{align}
\prod_{j\geq 1}(1-xq^{-j+\frac{1}{2}})^{-1}.
\end{align}
Comparing to formulae (\ref{partionC3in}) and
(\ref{integralityC3in}) for when $\tau=0$, by using the definition
of plethystic logarithm Log,  we obtain \makeatletter
\let\@@@alph\@alph
\def\@alph#1{\ifcase#1\or \or $'$\or $''$\fi}\makeatother
\begin{subnumcases}
{N_{m,k}(0)=} 1, & if $m=1$ and $k=0$, \nonumber\\ \nonumber 0, &
otherwise.\
\end{subnumcases}
\makeatletter\let\@alph\@@@alph\makeatother

However, for general $\tau\in \mathbb{Z}$, the Conjecture
\ref{Mainconj} is nontrivial. The first result of this paper is
that, when $\tau \leq 0$, we find the OV invariants $N_{n,k}(\tau)$
can be expressed in terms of the Betti number of certain quiver
variety, which implies the Conjecture \ref{Mainconj} for the case of
$\tau \leq 0$.

\subsection{Proof of the Conjecture \ref{Mainconj} for the case of $\tau\leq 0$}
We construct a quiver of one vertex with $1-\tau$ infinite legs. Let
$\mathcal{Q}_{\tilde{n}(1-\tau)}$ be the associated quiver variety
of the representations in a dimension related to $n$ and $1-\tau$,
we refer to \cite{HLRV2} and Section \ref{Section-quiver} for this
construction. Let $d_{\tilde{n}{(1-\tau)}}=\dim
\mathcal{Q}_{\tilde{n}(1-\tau)}$. There is a Weyl group $S_n$ which
acts on the compactly supported cohomology
$H_c^{1-n+2d_{\tilde{n}{(1-\tau)}}-j}(\mathcal{Q}_{\tilde{n}(1-\tau)};\mathbb{C})$.
Then, we have the following
\begin{theorem} \label{mainthm}
If $n$ is odd
 \makeatletter
\let\@@@alph\@alph
\def\@alph#1{\ifcase#1\or \or $'$\or $''$\fi}\makeatother
\begin{subnumcases}
{N_{n,j}(\tau)=} 0, &$j$ is odd, \label{mainformual1}\\
\nonumber
-(-1)^{(\tau-1)n}\dim(H_c^{1-n+2d_{\tilde{n}{(1-\tau)}}-j}(\mathcal{Q}_{\tilde{n}(1-\tau)};\mathbb{C})^{S_{n}}),
&$j$ is even.
\end{subnumcases}
\makeatletter\let\@alph\@@@alph\makeatother

If $n$ is even
 \makeatletter
\let\@@@alph\@alph
\def\@alph#1{\ifcase#1\or \or $'$\or $''$\fi}\makeatother
\begin{subnumcases}
{N_{n,j}(\tau)=} 0, &$j$ is even, \label{mainformual2}\\
\nonumber
-(-1)^{(\tau-1)n}\dim(H_c^{1-n+2d_{\tilde{n}{(1-\tau)}}-j}(\mathcal{Q}_{\tilde{n}(1-\tau)};\mathbb{C})^{S_{n}}),
&$j$ is odd.
\end{subnumcases}
\makeatletter\let\@alph\@@@alph\makeatother
\end{theorem}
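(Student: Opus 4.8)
The plan is to realise both sides as explicit elements of $\mathbb{Z}[q^{\pm 1/2}]$ and to match them through the plethystic logarithm, using the arithmetic description of the quiver variety from \cite{HLRV2}. The starting observation is that the Cauchy-type computation carried out above for $\tau=0$ is the simplest instance of a Hua-type generating function: the series $Z^{(\mathbb{C}^3,\mathcal{D}_\tau)}(g_s,\mathbf{x}=(x,0,0,\dots))$, whose $n$-th coefficient is $c_n(q):=(-1)^{n(\tau-1)}q^{\frac{n(n-1)}{2}\tau+\frac{n^2}{2}}/\prod_{i=1}^n(1-q^i)$, should be identified with the point-counting (Hua) generating function of the one-vertex quiver carrying $1-\tau$ infinite legs, restricted to the trivial $S_n$-type. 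Here the quadratic factor $q^{\frac{n(n-1)}{2}\tau+\frac{n^2}{2}}$ is exactly the contribution of the Tits/Euler form of this quiver in dimension $n$, which is how the integer $\tau$ gets encoded as the leg-number $1-\tau$; extracting the trivial $S_n$-isotypic component is, on the symmetric-function side, the projection onto $s_{(n)}=h_n$, which is precisely the specialization $\mathbf{x}=(x,0,0,\dots)$ that produced the $\tau=0$ identity from \eqref{cauchyidentity}.

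Granting this identification, I would first invoke the geometry of $\mathcal{Q}_{\tilde{n}(1-\tau)}$: for the generic moment/stability parameters used in its construction it is smooth, and its compactly supported cohomology is pure of Tate type and concentrated in even degrees. This vanishing in odd degrees is what forces the two ``$=0$'' lines, since $H_c^{1-n+2d_{\tilde{n}(1-\tau)}-j}(\mathcal{Q}_{\tilde{n}(1-\tau)};\mathbb{C})$ can be nonzero only when $1-n+2d_{\tilde{n}(1-\tau)}-j$ is even, i.e. when $j\equiv 1-n\pmod 2$; this is exactly the parity recorded in \eqref{mainformual1} and \eqref{mainformual2}. Purity then lets me read off the $S_n$-invariant Betti numbers from the trivial-type part of the $\mathbb{F}_q$-point count, so that $\sum_j \dim\bigl(H_c^{1-n+2d_{\tilde{n}(1-\tau)}-j}(\mathcal{Q}_{\tilde{n}(1-\tau)})^{S_n}\bigr)q^{j/2}$ is a genuine Poincaré polynomial in $q^{\pm 1/2}$ with no cancellation.

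Third, I would apply $\mathrm{Log}$ to $Z^{(\mathbb{C}^3,\mathcal{D}_\tau)}$ via the definition of Section \ref{subsection-plethystic}. By the Hua/HLRV principle, the plethystic logarithm of a point-counting generating function produces the (type-refined) Kac polynomials of the quiver, and by the geometric theorem of \cite{HLRV2} these coincide with the Poincaré polynomials of the corresponding isotypic part of $H_c^\ast(\mathcal{Q}_{\tilde{n}(1-\tau)})$. Multiplying by $(q^{1/2}-q^{-1/2})$ — which plays the role of the $(q-1)$ normalisation in Hua's formula and clears the denominator $\prod_i(1-q^i)$ — and comparing coefficients of $q^{j/2}$ then yields $f_n^\tau(q)=\sum_j N_{n,j}(\tau)q^{j/2}$ with $N_{n,j}(\tau)=-(-1)^{(\tau-1)n}\dim\bigl(H_c^{1-n+2d_{\tilde{n}(1-\tau)}-j}(\mathcal{Q}_{\tilde{n}(1-\tau)})^{S_n}\bigr)$ in the stated parities. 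The global sign $-(-1)^{(\tau-1)n}$ comes from the factor $(-1)^{n(\tau-1)}$ in $c_n(q)$ together with $q^{1/2}-q^{-1/2}=-q^{-1/2}(1-q)$, and the normalisation $1-n+2d_{\tilde{n}(1-\tau)}-j$ records Poincaré duality on $\mathcal{Q}_{\tilde{n}(1-\tau)}$ together with the single degree shift induced by the prefactor $(q^{1/2}-q^{-1/2})$.

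The main obstacle is the first identification together with the exact degree dictionary: one must verify, uniformly for all $\tau\le 0$, that the trivial-$S_n$-type Hua series of the $(1-\tau)$-leg quiver equals the explicit $q$-series with coefficients $c_n(q)$ — a common generalisation of the Cauchy identity \eqref{cauchyidentity} in which the $q$-Gaussian $q^{\frac{n(n-1)}{2}\tau}$ is absorbed into the Euler form of the legs — and that the arithmetic weight grading matches the cohomological degree $1-n+2d_{\tilde{n}(1-\tau)}-j$ under Poincaré duality. Establishing purity and even-degree concentration compatibly with the $S_n$-action (so that the weighted point count literally computes the invariant Betti numbers) is the other delicate input; once these are in place, the remaining manipulation of $q$-series and the sign/parity bookkeeping are routine.
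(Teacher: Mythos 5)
Your proposal follows essentially the same route as the paper: the identification you flag as the ``main obstacle'' --- matching the series with coefficients $c_n(q)$ to the trivial-isotypic Hua series of the one-vertex quiver with $1-\tau$ infinite legs --- is precisely Proposition 3.4 of \cite{HLRV2} as quoted in Section \ref{Section-quiver}, where $\gamma(\mathbf{v}(\mathbf{k}))=(2-k)n^2$ and $\delta(\mathbf{v}(\mathbf{k}))=kn$ with $k=1-\tau$ reproduce the exponent $\frac{n(n-1)}{2}\tau+\frac{n^2+n}{2}$ after the substitution $x=q^{1/2}T$, and the remainder is the same appeal to Theorem 1.4/Corollary 1.5 of \cite{HLRV2} together with the sign bookkeeping. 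The only cosmetic difference is that the parity vanishing you attribute to purity and even-degree concentration appears in the paper as the automatic observation that all exponents $\tfrac{1-n}{2}+d_{\tilde{n}(1-\tau)}-j$ produced by the formula have the same parity modulo $\mathbb{Z}$.
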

Therefore, as a direct corollary, we have shown
\begin{corollary}
The Conjecture \ref{Mainconj} holds for $\tau\leq 0$.
\end{corollary}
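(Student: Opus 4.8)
The plan is to deduce the Corollary directly from Theorem \ref{mainthm}, since the latter already packages the two assertions of Conjecture \ref{Mainconj}---integrality and finiteness---into a single cohomological statement. First I would record that the hypothesis $\tau\le 0$ guarantees $1-\tau\ge 1$, so that the quiver with one vertex and $1-\tau$ legs constructed in Section \ref{Section-quiver} is a genuine finite quiver, and that for each $n\ge 1$ the associated quiver variety $\mathcal{Q}_{\tilde n(1-\tau)}$ is a variety of finite type over $\mathbb{C}$ of a well-defined finite dimension $d_{\tilde n(1-\tau)}$.

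The decisive observation is that the compactly supported cohomology $H_c^{\bullet}(\mathcal{Q}_{\tilde n(1-\tau)};\mathbb{C})$ of a finite-type complex variety is finite-dimensional in each degree and vanishes outside the range $0\le \bullet\le 2d_{\tilde n(1-\tau)}$. Passing to the $S_n$-invariant subspace only shrinks these spaces, so each $\dim\bigl(H_c^{\,1-n+2d_{\tilde n(1-\tau)}-j}(\mathcal{Q}_{\tilde n(1-\tau)};\mathbb{C})^{S_n}\bigr)$ is a well-defined nonnegative integer. Feeding this into the formulas \eqref{mainformual1} and \eqref{mainformual2} of Theorem \ref{mainthm}, together with the sign $-(-1)^{(\tau-1)n}\in\{\pm 1\}$, shows at once that every $N_{n,j}(\tau)$ is an integer; this is the integrality half of \eqref{integralityC3in}.

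For the finiteness half I would translate the vanishing range of $H_c^{\bullet}$ into a bound on $j$. The invariant cohomology in question can be nonzero only when $0\le 1-n+2d_{\tilde n(1-\tau)}-j\le 2d_{\tilde n(1-\tau)}$, that is, only for $1-n\le j\le 1-n+2d_{\tilde n(1-\tau)}$. Hence, setting $m=n$, for each fixed $m\ge 1$ there are at most finitely many $j$ with $N_{m,j}(\tau)\ne 0$, which is precisely the content of Conjecture \ref{Mainconj}. Assembling these two steps over all $n\ge 1$ completes the proof.

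I expect essentially no obstacle at the level of the Corollary itself: once Theorem \ref{mainthm} is in hand, the argument reduces to the standard finite-dimensionality and bounded support of the cohomology of a finite-type variety. The only point requiring a line of care is to confirm that $\mathcal{Q}_{\tilde n(1-\tau)}$ is genuinely of finite type despite the ``infinite legs'' in its pictorial description---i.e.\ that after fixing the relevant dimension vector the underlying representation/quiver variety is finite-dimensional---so that its compactly supported cohomology is finite-dimensional and concentrated in a bounded range of degrees. The real difficulty of the paper lies upstream, in establishing Theorem \ref{mainthm}, rather than in this corollary.
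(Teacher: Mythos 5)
Your proposal is correct and matches the paper's (implicit) argument: the paper states the corollary as an immediate consequence of Theorem \ref{mainthm}, and your spelling-out of why---finite-dimensionality of each $H_c^{\bullet}(\mathcal{Q}_{\tilde n(1-\tau)};\mathbb{C})$ gives integrality, and the bounded cohomological degree range gives finiteness of the nonzero $N_{n,j}(\tau)$---is exactly the intended deduction. Your side remark about the ``infinite legs'' is also resolved correctly, since the dimension vector is eventually zero along each leg.
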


Now the remain question is what about the case of $\tau\geq 1$? We
don't know how to prove this case, but we find it is closely related
to celebrated Rogers-Ramanujan identities (\ref{rr1}) and
(\ref{rr2}).

\subsection{A Rogers-Ramanujan type identity}
Combing(\ref{partionC3in}), (\ref{integralityC3in}) and the
definition of plethystic logarithm Log, Conjecture \ref{Mainconj}
can be rewritten in the form of infinite product
(\ref{infiniteprod}).

Let us take a closer look at the case of $\tau=1$. After some
numerical computations by Maple 13 (see Section \ref{section-RR} for
some of these numerical results), we observe the following rules for
those integers $N_{m,k}(1)$:
\begin{itemize}
\item If $m$ is even, $N_{m,k}(1)\geq 0$, and when $m$ is odd, $N_{m,k}(1)\leq
0$.

\item For a fix integer $m\geq 4$, we define the subset of $\mathbb{Z}$,
\begin{align*}
I_m=\{m+1,m+3,....,m^2-2m-5,m^2-2m-3,(m-1)^2\}\subset \mathbb{Z}.
\end{align*}
then  $N_{m,k}(1)=0$, if $k \in \mathbb{Z}\setminus I_m$. Note that
the last gap in $I_m$ is $(m-1)^2-(m^2-2m-3)=4$. Moreover, we let
$I_1=\{0\}$, $I_2=\{1\}$, $I_3=\{4\}$, according to the computations
in Section \ref{section-RR}.

\end{itemize}
Based on the above observations, let $n_{m,k}=(-1)^{m}N_{m,k}(1)$,
we have the following refined form of the infinite product formula
(\ref{infiniteprod}) for $\tau=1$.
\begin{conjecture}
For a fixed $m\geq 1$, there are only finitely many positive
integers $n_{m,k}$ for $k\in I_m$, such that
\begin{align} \label{deformedRRConjecturein2}
\sum_{n\geq 0} \frac{a^n q^{n^2}}{(1-q)(1-q^2)\cdots
(1-q^n)}=\prod_{m\geq1,l\geq0}\prod_{k\in
\mathbb{Z}}\frac{(1-a^{2m}q^{k+l+2m})^{n_{2m,2k+2m-1}}}{(1-a^{2m-1}q^{k+l+2m-1})^{n_{2m-1,2k+2m-2}}}.
\end{align}
\end{conjecture}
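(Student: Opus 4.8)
The first step is to make the identity formal. Via the substitution $x=aq^{1/2}$ (equivalently $a=xq^{-1/2}$) the left-hand side is precisely the reduced partition function $Z^{(\mathbb{C}^3,\mathcal{D}_1)}$, since $q^{n^2-n/2}x^n=a^nq^{n^2}$. I would then observe that the right-hand product is nothing but $\mathrm{Exp}(\mathrm{Log}(Z^{(\mathbb{C}^3,\mathcal{D}_1)}))$ written out explicitly: expanding $1/(q^{1/2}-q^{-1/2})=-\sum_{l\geq 0}q^{l+1/2}$ inside $\mathrm{Log}(Z)=\sum_{n\geq 1}f_n^1(q)/(q^{1/2}-q^{-1/2})\,x^n$ gives $Z=\prod_{n,k,l}(1-q^{(k+1)/2+l}x^n)^{N_{n,k}(1)}$, and after $x=aq^{1/2}$ the reindexings $k\mapsto 2k+2m-1$ (for $n=2m$) and $k\mapsto 2k+2m-2$ (for $n=2m-1$) move the even-degree exponents $N_{2m,\cdot}(1)=n_{2m,\cdot}$ into the numerator and the odd-degree ones $N_{2m-1,\cdot}(1)=-n_{2m-1,\cdot}$ into the denominator, exactly reproducing (\ref{deformedRRConjecturein2}). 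Thus the identity holds as soon as the $N_{m,k}(1)$ are well-defined integers with finite support for each $m$; the genuine content is the $\tau=1$ case of Conjecture \ref{Mainconj} together with the sign rule $n_{m,k}=(-1)^mN_{m,k}(1)>0$ and the support statement $k\in I_m$.

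For integrality and finiteness I would first try to transplant the $\tau\leq 0$ argument. The obstruction is clear: the Cauchy identity (\ref{cauchyidentity}), which expresses $Z^{(\mathbb{C}^3,\mathcal{D}_\tau)}$ for $\tau\leq 0$ as a Schur specialization and hence as a quiver generating series, fails for $\tau\geq 1$, so the Betti-number interpretation of Theorem \ref{mainthm} is unavailable. Lacking the geometry, I would compute $\mathrm{Log}(Z^{(\mathbb{C}^3,\mathcal{D}_1)})$ directly: writing $Z=\sum_n c_nx^n$ with $c_n=q^{n^2-n/2}/\prod_{i=1}^n(1-q^i)$ and expanding the plethystic logarithm by Newton's identities and Möbius inversion, one obtains $f_m^1(q)$ as a finite alternating sum of products $\prod_i c_{n_i}(q^{d_i})$. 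Finiteness of $f_m^1$ then reduces to the purely $q$-series statement that, after this alternating combination, the a priori infinite tail produced by the poles $1/\prod(1-q^i)$ cancels identically, leaving a Laurent polynomial.

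For positivity and the precise support $I_m$ I would invoke the Rogers-Ramanujan machinery suggested by the title. The series $\sum_n a^nq^{n^2}/\prod_{i=1}^n(1-q^i)$ is the first iterate of the Bailey chain on the unit Bailey pair, so Rogers' transformation rewrites it as $1/\prod_{i\geq 1}(1-aq^i)$ times an explicit theta-type sum; at $a=1$ the Jacobi triple product collapses this to the modulus-$5$ product $\prod_{j\geq 0}1/((1-q^{5j+1})(1-q^{5j+4}))$, the first Rogers-Ramanujan identity. The characteristic last gap of $4$ in $I_m$ and the overall modulus-$5$ flavour should be traced back to this theta sum. The plan is to match the two product representations of $Z$—the Ooguri-Vafa product (\ref{deformedRRConjecturein2}) and the Bailey/theta form—coefficient by coefficient in $a$, reading off the $n_{m,k}$ and thereby establishing their positivity, the finiteness of the support, and its location $I_m$ in one stroke; positivity in particular would follow if the matched coefficients admit a manifestly nonnegative (partition-counting or cohomological) interpretation analogous to the $S_n$-invariant Betti numbers in Theorem \ref{mainthm}.

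The hardest part will be finiteness, i.e.\ the polynomiality of $f_m^1(q)$. For $\tau\leq 0$ this was automatic because the invariants were dimensions of finite-dimensional cohomology groups; at $\tau=1$ no such model is known, so the cancellation of the infinite $q$-tail in the plethystic logarithm must be proved by hand. I expect the decisive ingredient to be one of two things: either a new Bailey-lattice transformation converting the Ooguri-Vafa product into a convergent modulus-$5$ form term by term in $a$, or the construction of a Nakajima-type quiver variety (with a stability adapted to $\tau=1$) whose $S_n$-invariant compactly supported cohomology realizes the $n_{m,k}$. Short of such a structure, pinning the support down to exactly $I_m$—rather than merely proving it finite—and proving that every $n_{m,k}$ is strictly positive will remain the central difficulty, since these encode the fine modulus-$5$ arithmetic hidden inside the Rogers-Ramanujan identities.
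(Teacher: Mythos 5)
The statement you are addressing is presented in the paper as a conjecture, and the paper offers no proof of it. What the paper actually establishes is: (a) the formal derivation of the infinite product (\ref{infiniteprod}) from the plethystic exponential, valid for any $\tau$; (b) the specialization to $\tau=1$ under $x=aq^{1/2}$ with the reindexings $j=2k+2m-1$ for $n=2m$ and $j=2k+2m-2$ for $n=2m-1$, which yields (\ref{deformedRRConjecturein2}) exactly once one grants that the $N_{m,k}(1)$ are integers of finite support, of sign $(-1)^m$, supported on $I_m$; and (c) numerical evidence (Maple, $m\leq 18$) together with the consistency checks at $a=1$ and $a=q^{1/2}$ against (\ref{rr1}) and (\ref{rr2}). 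Your first paragraph reproduces (a) and (b) correctly --- the substitution, the sign bookkeeping placing $n_{2m,\cdot}=N_{2m,\cdot}(1)$ in the numerator and $N_{2m-1,\cdot}(1)=-n_{2m-1,\cdot}$ in the denominator, and the reduction of the whole statement to the $\tau=1$ case of Conjecture \ref{Mainconj} plus positivity and support. Up to that point you are doing exactly what the paper does.

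The gap is everything after that, and you should be explicit that it is a gap rather than a proof. Your proposed routes --- expanding the plethystic logarithm and proving by hand that the infinite $q$-tail cancels, invoking a Bailey-chain/Rogers--Selberg rewriting to extract positivity and the support $I_m$, or constructing a Nakajima-type quiver variety adapted to $\tau=1$ --- are research programmes: no cancellation is exhibited, no Bailey transformation is actually carried out coefficientwise in $a$, and no quiver model is produced. None of these steps is carried out in the paper either; its theorem-level results (Theorem \ref{mainthm} and Theorem \ref{theoremintegrality}) cover only $\tau\leq 0$ and the single evaluation $f_m^{\tau}(1)\in\mathbb{Z}$, the latter giving only $\sum_k n_{m,k}\in\mathbb{Z}$, which falls far short of termwise integrality, positivity, or the precise support. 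So your write-up is a correct identification of where the difficulty lies, consistent with the paper's own presentation, but the conjecture remains unproved in both your proposal and the paper.
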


\begin{remark}
After the email correspondence with Ole Warnaar \cite{Wa}, he
suggested the author to rewrite the deformed Rogers-Ramanujan
identity (\ref{deformed-r-r}) into the form
(\ref{deformedRRConjecturein2}) which is related to the
Rogers-Selberg identity \cite{GOW}.
\end{remark}

Note that, (\ref{deformedRRConjecturein2}) can be regarded as a
Rogers-Ramanujan type identity. Recall the two classical
Rogers-Ramanujan identities£º
\begin{align} \label{rr1}
\sum_{n\geq 0}\frac{q^{n^2}}{(1-q)\cdots (1-q^n)}=\prod_{n\geq
0}\frac{1}{(1-q^{5n+1})(1-q^{5n+4})},
\end{align}
\begin{align} \label{rr2}
\sum_{n\geq 0}\frac{q^{n^2+n}}{(1-q)\cdots (1-q^n)}=\prod_{n\geq
0}\frac{1}{(1-q^{5n+2})(1-q^{5n+3})}.
\end{align}
Formulae (\ref{rr1}), (\ref{rr2}) were first discovered by Rogers
\cite{Rogers}, and then rediscovered by Ramanujan \cite{Hardy},
Schur \cite{Schur} and Baxter \cite{Baxter}. Now, there have been
many different proofs and interpretations for them
\cite{And,GM,LW,Bre,Stem}. We refer to \cite{GOW,Wa} for most modern
understanding of the Rogers-Ramanujan identities.

These conjectural integers $n_{m,k}$ appearing in
(\ref{deformedRRConjecturein2}) are important. We expect an explicit
formula for them. Let
\begin{align}
 g_m(q)=\sum_{k\in \mathbb{Z}}n_{m,k}q^k,
\end{align}
  by using Maple 13, we have computed $g_m$ for small $m$ as showed
  in Section \ref{section-RR}.

By our numerical computations, if we let $a=1$ and
$a=q^{\frac{1}{2}}$ respectively in formula
(\ref{deformedRRConjecturein2}), then it recovers the
Rogers-Ramanujan identities (\ref{rr1}) and (\ref{rr2}). Therefore,
(\ref{deformedRRConjecturein2}) can be regarded as an one-parameter
deformed Rogers-Ramanujan identity. From this point of view,
integrality structures of topological string partitions provide a
lot of infinite product formulas, which largely extend the
explorations of Rogers-Ramanujan type formulae.

Finally, in order to give the reader some flavor of these numbers
$n_{m,k}$, we compute the value $f^\tau(1)$ of
(\ref{integralityC3in}) at $q=1$ from Mari\~no-Vafa formula
\cite{MV,LLZ} as follow
\begin{align}
f^{\tau}_{m}(1)=\frac{1}{m^2}\sum_{d|m}\mu(m/d)(-1)^{d\tau}\binom{d(\tau+1)-1}{d-1},
\end{align}
where $\mu(n)$ denotes the M\"obius function. We prove that
\begin{theorem}
For any $m\geq 1$,
\begin{align} \label{integralityftau}
f^{\tau}_{m}(1)=\frac{1}{m^2}\sum_{d|m}\mu(m/d)(-1)^{d\tau}\binom{d(\tau+1)-1}{d-1}
\in \mathbb{Z}.
\end{align}
\end{theorem}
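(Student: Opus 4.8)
The plan is to treat the right-hand side of \eqref{integralityftau} as a purely arithmetic quantity and to prove the divisibility
\[
m^{2}\ \Big|\ S_m:=\sum_{d\mid m}\mu(m/d)\,a_d,\qquad a_d:=(-1)^{d\tau}\binom{d(\tau+1)-1}{d-1}\in\mathbb Z,
\]
which is exactly the assertion $f^{\tau}_m(1)=S_m/m^{2}\in\mathbb Z$. Since $a_d$ is, up to sign, a binomial coefficient, it is manifestly an integer, so the whole difficulty lies in the two factors of $m$. I would separate the argument into an ``easy'' factor of $m$ (a Gauss/necklace congruence) and a ``hard'' factor of $m$ (a supercongruence), establishing the latter one prime at a time. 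Throughout I set $t=\tau+1$; the degenerate value $\tau=-1$ (where $a_d\equiv-1$ and $S_m=-[m{=}1]$) is immediate and treated separately.

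For the first factor I would use Lagrange inversion. Let $U=U(x)$ be the Fuss--Catalan series determined by $U=(-1)^{\tau}x\,(1+U)^{t}$; the Lagrange--B\"urmann formula applied to $H(w)=\log(1+w)$, $\phi(w)=(1+w)^{t}$ gives $[x^{n}]\log(1+U)=\tfrac1n a_n$, i.e. $\exp\!\big(\sum_{n\ge1}\tfrac{a_n}{n}x^{n}\big)=1+U$. Because $U\in x\mathbb Z[[x]]$, this exponential lies in $\mathbb Z[[x]]$, and by the standard equivalence between integrality of $\exp(\sum \tfrac{a_n}{n}x^n)$ and the necklace congruence, this forces $m\mid S_m$ for every $m$. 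Equivalently, $b_m:=S_m/m=[x^m]\mathrm{Log}(1+U)\in\mathbb Z$, where $\mathrm{Log}$ is the plethystic logarithm in the variable $x$; this is the first factor of $m$.

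For the second factor I would show $p^{2\,v_p(m)}\mid S_m$ for each prime $p$ and conclude $m^{2}\mid S_m$ by the Chinese Remainder Theorem. Fix $p$, set $s=v_p(m)$ and write $m=p^{s}m'$ with $p\nmid m'$. Since $\mu(m/d)=0$ unless $m/d$ is squarefree, only $d=p^{s}d'$ and $d=p^{s-1}d'$ with $d'\mid m'$ survive, and one obtains
\[
S_m=\sum_{\substack{d'\mid m'\\ m'/d'\ \mathrm{squarefree}}}\mu(m'/d')\,\big(a_{p^{s}d'}-a_{p^{s-1}d'}\big).
\]
It therefore suffices to prove the termwise bound $v_p\big(a_{p^{s}d'}-a_{p^{s-1}d'}\big)\ge 2s$. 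Setting $e=p^{s-1}d'$ and using $\binom{et-1}{e-1}=\tfrac1t\binom{et}{e}$, the difference equals, up to a sign, $\tfrac1t\big(\binom{p\,et}{p\,e}-\binom{et}{e}\big)$ (for $p$ odd the signs $(-1)^{p^{s}d'\tau}$ and $(-1)^{p^{s-1}d'\tau}$ agree). The Jacobsthal--Kazandzidis congruence gives $v_p\big(\binom{p\,et}{p\,e}-\binom{et}{e}\big)\ge 3+v_p\big((et)(e)(e(t-1))\big)$ for $p\ge5$, and since $p\nmid d'$ we have $v_p(e)=s-1$, so this exponent is at least $3s+v_p(t)+v_p(t-1)$. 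Dividing by $t$ costs at most $v_p(t)$ and leaves $v_p\big(a_{p^{s}d'}-a_{p^{s-1}d'}\big)\ge 3s\ge 2s$, as required; in particular the case $p\ge5$ is completely uniform, with slack $s$ to spare.

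The main obstacle is the small primes $p\in\{2,3\}$, where the Jacobsthal--Kazandzidis congruence either fails or holds only in a weaker form, and where the sign $(-1)^{d\tau}$ must be tracked. For $s\ge2$ the exponents $p^{s}d'$ and $p^{s-1}d'$ are both even, so the signs trivialize and the weaker $p$-adic binomial congruences still leave enough room (we need $2s$, while the factor $p^{3(s-1)}\mid e^{3}$ already supplies roughly $3(s-1)$). The genuinely delicate case is $p\in\{2,3\}$ with $s=1$: here $e=d'$ is prime to $p$, the binomial congruence provides only its base exponent, and for $p=2$ the two signs can disagree according to the parity of $\tau$, so the difference $a_{2d'}-a_{d'}$ is replaced by a sum. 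I would dispose of these finitely many residual congruences by an explicit $\bmod\,p^{2}$ comparison of $\binom{p\,d't-1}{p\,d'-1}$ with $\binom{d't-1}{d'-1}$, splitting into the cases $\tau$ even/odd and $p\mid(\tau+1)$ or not; the slack already present for $p\ge5$ strongly suggests the same divisibility persists, and verifying it directly for $p=2,3$ completes the proof. As a conceptual alternative I would note that the statement amounts precisely to the second-order Gauss congruence for $(a_n)$, equivalently to $m\mid b_m$ with $b_m=[x^m]\mathrm{Log}(1+U)$, which one may hope to read off from a further plethystic (or $\lambda$-ring) structure on the algebraic series $1+U$.
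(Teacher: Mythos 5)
Your overall architecture coincides with the paper's: after using the support of $\mu$ to restrict to the pairs $d=p^{s}d'$ and $d=p^{s-1}d'$, everything reduces to the termwise supercongruence $v_p\bigl(a_{p^{s}d'}-a_{p^{s-1}d'}\bigr)\ge 2s$, which is exactly the paper's Lemma \ref{lemma2}. Your treatment of $p\ge 5$ via Jacobsthal--Kazandzidis is correct (and gives $3s$, more than needed), and the Lagrange-inversion observation that $\exp\bigl(\sum_{n\ge1}\tfrac{a_n}{n}x^n\bigr)=1+U\in\mathbb{Z}[[x]]$ does yield $m\mid S_m$; but, as you implicitly concede, that single factor of $m$ cannot be combined with anything to produce $m^2$, so it plays no role in the argument you actually need.

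The genuine gap is that the cases $p=2$ and $p=3$ --- above all $p=2$, $s=1$, $\tau$ odd, where the signs $(-1)^{2d'\tau}$ and $(-1)^{d'\tau}$ disagree and the difference becomes a sum --- are precisely where the content of the theorem lies, and you do not prove them: ``I would dispose of these residual congruences'' and ``strongly suggests the same divisibility persists'' are not arguments. Nor are these finitely many congruences: for $p=2$, $s=1$ you must show $4\mid \binom{2d't-1}{2d'-1}-(-1)^{\tau}\binom{d't-1}{d'-1}$ for \emph{every} odd $d'$ and every $\tau$, an infinite family in which (since $t=\tau+1$ is even in the delicate case) the division by $t$ in $\binom{et-1}{e-1}=\tfrac1t\binom{et}{e}$ costs a factor of $2$ and the $2$-adic sign of $\binom{2et}{2e}/\binom{et}{e}$ must be matched against $(-1)^{d\tau}$. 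Reducing this family to a finite check is exactly what the paper's Lemma \ref{functionfp} accomplishes via the elementary function $f_p(n)=n!/(p^{\lfloor n/p\rfloor}\lfloor n/p\rfloor!)$, whose special clause $f_2(2n)\equiv(-1)^{\lfloor n/2\rfloor}\pmod 4$ feeds into the parity verification $\lfloor(\tau+1)n/4\rfloor+\lfloor\tau n/4\rfloor+\lfloor n/4\rfloor-\tau n/2\equiv 0\pmod 2$ in the proof of Lemma \ref{lemma2}. A $p=3$ counterpart is also required, since the exponent $3+v_3\bigl(nm(n-m)\bigr)$ genuinely fails at $p=3$ (e.g.\ $\binom{6}{3}-\binom{2}{1}=18$ has $v_3=2$, not $3$), so one must check that the weaker base exponent $2$ survives the division by $t$ when $3\mid t$. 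Until you supply these computations, the proof is incomplete at its hardest point.
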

In particular, we obtain
\begin{corollary}
For any $m\geq 1$,
\begin{align}
g_m(1)=\sum_{k}n_{m,k}=(-1)^mf_{m}^{1}(1)\in \mathbb{Z}.
\end{align}
\end{corollary}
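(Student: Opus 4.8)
The statement to prove is the integrality assertion $f^{\tau}_{m}(1)\in\mathbb{Z}$; the explicit formula for $f^{\tau}_{m}(1)$ is taken as given from the Mari\~no--Vafa formula. Writing $u=\tau+1$, $\beta_n=\binom{nu-1}{n-1}$ and $c_n=(-1)^{n\tau}\beta_n\in\mathbb{Z}$, the claim is equivalent to the divisibility
\[
m^2 \,\Big|\, S_m:=\sum_{d\mid m}\mu(m/d)\,c_d .
\]
I would prove this one prime at a time: fixing a prime $p$ with $p^e\|m$ and writing $m=p^eN'$ with $p\nmid N'$, it suffices to show $v_p(S_m)\ge 2e$ for every such $p$, since the factors $p^{2e}$ for distinct primes are pairwise coprime and so their product $m^2$ then divides $S_m$. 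Because $\mu(m/d)\ne 0$ only when $m/d$ is squarefree, I would group the surviving divisors into Möbius-pairs $s$ and $ps$ (with $s\mid N'$ squarefree, $p\nmid s$), which rewrites
\[
S_m=\sum_{\substack{t\mid N',\ t\ \mathrm{squarefree}\\ p\nmid t}}\mu(t)\bigl(c_{p^e k_t}-c_{p^{e-1}k_t}\bigr),\qquad k_t=N'/t\ \text{coprime to }p .
\]
Thus the whole problem reduces to a single ``order-two'' supercongruence: $c_{p^e k}\equiv c_{p^{e-1}k}\pmod{p^{2e}}$ for every $e\ge 1$ and every $k$ coprime to $p$.

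To establish this supercongruence I would expand the ratio of consecutive binomials directly. Writing $n'=p^{e-1}k$ and separating the factors divisible by $p$ in $\binom{pn'u-1}{pn'-1}=\prod_{i=1}^{pn'-1}\frac{pn'(u-1)+i}{i}$, the factors with $p\mid i$ reproduce $\beta_{n'}$, which yields the clean identity
\[
\beta_{pn'}=\beta_{n'}\prod_{\substack{1\le i\le pn'-1\\ p\nmid i}}\Bigl(1+\frac{pn'(u-1)}{i}\Bigr).
\]
Here $v_p\bigl(pn'(u-1)\bigr)=1+(e-1)+v_p(u-1)\ge e$, while each $1/i$ is a $p$-adic unit, so every factor is $\equiv 1\pmod{p^{e}}$ and all degree-$\ge 2$ terms of the product contribute $v_p\ge 2e$. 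Hence, modulo $p^{2e}$, only the linear term survives:
\[
\frac{\beta_{pn'}}{\beta_{n'}}\equiv 1+pn'(u-1)\,H^\ast \pmod{p^{2e}},\qquad H^\ast:=\sum_{\substack{1\le i\le p^e k\\ p\nmid i}}\frac1i .
\]
The remaining input is a Wolstenholme-type estimate $v_p(H^\ast)\ge e$, which I would obtain from the involution $i\mapsto p^ek-i$: each pair contributes $\frac{p^ek}{i(p^ek-i)}$, of valuation $\ge e$. Combining, $v_p\bigl(pn'(u-1)H^\ast\bigr)\ge 1+(e-1)+0+e=2e$, so $\beta_{pn'}\equiv\beta_{n'}\pmod{p^{2e}}$; for $p$ odd the sign ratio $(-1)^{(pn'-n')\tau}=(-1)^{n'(p-1)\tau}$ is trivial, which upgrades this to $c_{p^ek}\equiv c_{p^{e-1}k}\pmod{p^{2e}}$ at once.

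The main obstacle is the prime $p=2$, where two complications meet. First, the involution $i\mapsto 2^ek-i$ on odd residues has the fixed point $i=2^{e-1}k$, which is genuinely odd precisely when $e=1$ and $k$ is odd; for $e\ge 2$ it is even and excluded, so the estimate $v_2(H^\ast)\ge e$ survives and the argument closes exactly as above. Second, in the borderline case $e=1$ the sign factor $(-1)^{k\tau}$ need no longer be trivial, so $\beta$ and $c$ can differ by a sign; here I would argue directly modulo $4$, tracking that the unit part $H^\ast\equiv 1/k$ forces $\beta_{2k}\equiv -\beta_k\pmod 4$ exactly in the sub-case where $u$ is even (equivalently where the sign flips), so that $c_{2k}-c_k=\beta_{2k}+\beta_k\equiv 0\pmod 4$, while in the other sub-case $\beta_{2k}\equiv\beta_k\pmod 4$ and the sign does not flip. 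I expect this $p=2,\ e=1$ bookkeeping to be the only delicate point; the degenerate values $u\le 0$ (i.e.\ $\tau\le -1$) are handled by the same product identity read with generalized binomials, since no numerator factor $n(u-1)+i$ with $1\le i\le n-1$ can vanish, and everything else is a uniform $p$-adic valuation count. Once the supercongruence holds for all $p$ and $e$, summing the Möbius-paired differences gives $v_p(S_m)\ge 2e$ for each $p^e\|m$, hence $m^2\mid S_m$ and $f^{\tau}_{m}(1)\in\mathbb{Z}$.
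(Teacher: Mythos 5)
Your proof is correct and follows essentially the same route as the paper: both reduce, via the M\"obius pairing of divisors $d$ and $d/p$, to the supercongruence $c_{p^{e}k}\equiv c_{p^{e-1}k}\pmod{p^{2e}}$ (the paper's Lemma 5.4) and establish it with the same Wolstenholme-type involution $i\mapsto M-i$ on the prime-to-$p$ harmonic sum, isolating $p=2$, $e=1$ as the one delicate case exactly as the paper does. The only difference is cosmetic: you expand the ratio $\beta_{pn'}/\beta_{n'}$ directly as $\prod_{p\nmid i}\bigl(1+pn'\tau/i\bigr)$, while the paper routes the same computation through the auxiliary function $f_p(n)=n!/(p^{[n/p]}[n/p]!)$ and an induction on $n$.
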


The rest of this article is arranged as follows: In Section
\ref{Section-sym}, we introduce the basic notations for partitions,
symmetric functions and plethystic operators. Then, we review the
mathematical structures of topological strings in Section
\ref{Section-top}. We formulate the general Ooguri-Vafa conjecture
by using plethstic operators and we present the explicit form of
Ooguri-Vafa conjecture for the open string model
$(\mathbb{C}^3,\mathcal{D}_\tau)$.  In Section \ref{Section-quiver},
we first review the main results of the work \cite{HLRV2}, as an
application, we prove the Ooguri-Vafa conjecture for
$(\mathbb{C}^3,\mathcal{D}_\tau)$ when $\tau\leq 0$. In Section
\ref{section-RR}, we focus on the Ooguri-Vafa conjecture for
$(\mathbb{C}^3,\mathcal{D}_\tau)$ for the special case $\tau=1$. We
propose the deformed Rogers-Ramanujan type identity
(\ref{deformedRRConjecturein2}). Finally, we present a proof of the
integrality of (\ref{integralityftau}).

\section{Symmetric functions and plethystic operators}
\label{Section-sym}
\subsection{Partitions and symmetric functions}
A partition $\lambda$ is a finite sequence of positive integers $%
(\lambda_1,\lambda_2,..)$ such that $\lambda_1\geq
\lambda_2\geq\cdots$. The length of $\lambda$ is the total number of
parts in $\lambda$ and denoted by
$l(\lambda)$. The weight of $\lambda$ is defined by $|\lambda|=%
\sum_{i=1}^{l(\lambda)}\lambda_i$. If $|\lambda|=d$, we say
$\lambda$ is a partition of $d$ and denoted as $\lambda\vdash d$.
The automorphism group of $\lambda$, denoted by Aut($\lambda$),
contains all the permutations that
permute parts of $\lambda$ by keeping it as a partition. Obviously, Aut($%
\lambda$) has the order $|\text{Aut}(\lambda)|=\prod_{i=1}^{l(\lambda)}m_i(%
\lambda)! $ where $m_i(\lambda)$ denotes the number of times that
$i$ occurs in $\lambda$. Define
$\mathfrak{z}_{\lambda}=|\text{Aut}(\lambda)|\prod_{i=1}^{\lambda}\lambda_i$.

Every partition is identified to a Young diagram. The Young diagram of $%
\lambda$ is a graph with $\lambda_i$ boxes on the $i$-th row for $%
j=1,2,..,l(\lambda)$, where we have enumerated the rows from top to
bottom and the columns from left to right. Given a partition
$\lambda$, we define the conjugate partition $\lambda^t$ whose Young
diagram is the transposed
Young diagram of $\lambda$: the number of boxes on $j$-th column of $%
\lambda^t$ equals to the number of boxes on $j$-th row of $\lambda$, for $%
1\leq j\leq l(\lambda)$. For a box $x=(i,j)\in \lambda$, the hook
length and content are defined to be
$hl(x)=\lambda_i+\lambda_j^{t}-i-j+1$ and $cn(x)=j-i$ respectively.

In the following, we will use the notation $\mathcal{P}_+$ to denote
the set of all the partitions of positive integers. Let $0$ be the
partition of $0$, i.e. the empty partition. Define
$\mathcal{P}=\mathcal{P}_+\cup \{0\}$, and $\mathcal{P}^n$ the $n$
tuple of $\mathcal{P}$.

The power sum symmetric function of infinite variables
$\mathbf{x}=(x_1,..,x_N,..)$ is defined by
$p_{n}(\mathbf{x})=\sum_{i}x_i^n. $ Given a partition $\lambda$, we
define
$p_\lambda(\mathbf{x})=\prod_{j=1}^{l(\lambda)}p_{\lambda_j}(\mathbf{x}).
$ The Schur function $s_{\lambda}(\mathbf{x})$ is determined by the
Frobenius formula
\begin{align}  \label{Frobeniusformula}
s_\lambda(\mathbf{x})=\sum_{\mu}\frac{\chi_{\lambda}(\mu)}{\mathfrak{z}_\mu}p_\mu(\mathbf{x}),
\end{align}
where $\chi_\lambda$ is the character of the irreducible
representation of
the symmetric group $S_{|\lambda|}$ corresponding to $\lambda$, we have $%
\chi_{\lambda}(\mu)=0$ if $|\mu|\neq |\lambda|$. The orthogonality
of character formula gives
\begin{align}  \label{orthog}
\sum_\lambda\frac{\chi_\lambda(\mu)
\chi_\lambda(\nu)}{\mathfrak{z}_\mu}=\delta_{\mu \nu}.
\end{align}

We let $\Lambda(\mathbf{x})$ be the ring of symmetric functions of
$\mathbf{x}=(x_1,x_2,...)$ over the ring $\mathbb{Q}(q,t)$, and let
$\langle \cdot,\cdot \rangle$ be the Hall pair on
$\Lambda(\mathbf{x})$ determined by
\begin{align}
\langle s_\lambda(\mathbf{x}),s_{\mu}(\mathbf{x})
\rangle=\delta_{\lambda,\mu}.
\end{align}
For $\vec{\mathbf{x}}=(\mathbf{x}^1,...,\mathbf{x}^n)$, denote by
$\Lambda(\vec{\mathbf{x}}):=\Lambda(\mathbf{x}^1)\otimes_{\mathbb{Z}}\cdots
\otimes_{\mathbb{Z}}\Lambda(\mathbf{x}^n)$ the ring of functions
separately symmetric in $\mathbf{x}^1,...,\mathbf{x}^n$, where
$\mathbf{x}^i=(x^{i}_{1},x^{i}_{2},...)$. We will study functions in
the ring $\Lambda(\vec{\mathbf{x}})$. For
$\vec{\mu}=(\mu^1,...,\mu^n)\in \mathcal{P}^{n}$, we let
$a_{\vec{\mu}}(\vec{\mathbf{x}})=a_{\mu^1}(\mathbf{x}^1)\cdots
a_{\mu^n}(\mathbf{x}^n)\in\Lambda(\vec{\mathbf{x}})$ be homogeneous
of degree $(|\mu^1|,..,|\mu^n|)$. Moreover, the Hall pair on
$\Lambda(\vec{\mathbf{x}})$ is given by $\langle
a_1(\mathbf{x}^1)\cdots a_n(\mathbf{x}^n), b_1(\mathbf{x}^1)\cdots
b_n(\mathbf{x}^n)\rangle=\langle a_1(\mathbf{x}^1),b_1(\mathbf{x}^1)
\rangle \cdots \langle a_n(\mathbf{x}^n),b_n(\mathbf{x}^n) \rangle$
for $a_1(\mathbf{x}^1)\cdots a_n(\mathbf{x}^n),
b_1(\mathbf{x}^1)\cdots b_n(\mathbf{x}^n)\in
\Lambda(\vec{\mathbf{x}})$.
\subsection{Plethystic operators} \label{subsection-plethystic}
For $d\in \mathbb{Z}_+$, we define the $d$-th Adams operator
$\Psi_d$ as the $\mathbb{Q}$-algebra map on
$\Lambda(\vec{\mathbf{x}})$
\begin{align}
\Psi_{d}(f(\vec{\mathbf{x}};q,t))= f(\vec{\mathbf{x}}^d;q^d,t^d).
\end{align}
Denote by $\Lambda(\vec{\mathbf{x}})^+$ the set of  symmetric
functions with degree $\geq 1$. The plethystic exponential Exp and
logarithm Log are inverse maps
\begin{align}
\text{Exp}: \Lambda(\vec{\mathbf{x}})^+\rightarrow
1+\Lambda(\vec{\mathbf{x}})^+, \ \text{Log}:1+
\Lambda(\vec{\mathbf{x}})^+\rightarrow \Lambda(\vec{\mathbf{x}})^+
\end{align}
respectively defined by (see \cite{HLRV1})
\begin{align}
\text{Exp}(f)=\exp\left(\sum_{d\geq 1}\frac{\Psi_d(f)}{d}\right), \
\text{Log}(f)=\sum_{d\geq 1}\frac{\mu(d)}{d}\Psi_d(\log(f)),
\end{align}
where $\mu$ is the M\"{o}bius function.  It is clear that
\begin{align}
\text{Exp}(f+g)=\text{Exp}(f)\text{Exp}(g), \
\text{Log}(fg)=\text{Log}(f)+\text{Log}(g),
\end{align}
and Exp$(x)=\frac{1}{1-x}$, if we use the expansion
$\log(1-x)=-\sum_{d\geq 1}\frac{x^d}{d}$.

\section{Integrality structures in topological strings}
\label{Section-top}
\subsection{Closed strings and Gopakumar-Vafa conjecture}
Let $X$ be a Calabi-Yau 3-fold, the Gromov-Witten invariants
$K_{g,Q}^{X}$ is the virtual counting of the number of holomorphic
maps $f$ from genus $g$ Riemann suface $C_g$ to $X$ such that
$f_*[C_g]=Q\in H_2(X,\mathbb{Z})$ \cite{HKKPTVVZ}. Define
\begin{align*}
F^X(g_s,\omega)=\sum_{g\geq 0}g_s^{2g-2}F_{g}^X(\omega), \
Z^X(g_s,\omega)=\exp(F^X(g_s,\omega)).
\end{align*}
Usually, the Gromov-Witten invariants $K_{g,Q}^{X}$ are rational
numbers. In 1998,  Gopakumar and Vafa \cite{GV1} conjectured that
the generating function  $F^X(g_s,\omega)$ of Gromov-Witten
invariants can be expressed in terms of integer-valued invariants
$N_{g,Q}^{X}$ as follow
\begin{align} \label{GVformula}
F^X(g_s,\omega)=\sum_{g\geq 0}g_s^{2g-2}\sum_{Q\neq
0}K_{g,Q}^{X}e^{-Q \cdot\omega}=\sum_{g\geq 0, d\geq 1}\sum_{Q\neq
0}\frac{1}{d}N_{g,Q}^{X}\left(2\sin\frac{dg_s}{2}\right)^{2g-2}e^{-dQ\cdot
\omega}.
\end{align}
The invariants $N_{g,Q}^{X}$ are called GV invariants in
literatures. A central question in topological string is how to
define the GV invariants directly. We refer to \cite{HST,KiL,IP,MT}
for some progresses in this direction.

Obviously,  genus 0 part of the Gopakumar-Vafa formula
(\ref{GVformula}) yields
 the multiple covering formula
\cite{AM}:
\begin{align} \label{Multiplecovering}
\sum_{Q\neq 0} K_{0,Q}^{X}e^{-Q\cdot\omega}=\sum_{Q\neq
0}N_{0,Q}^{X}\sum_{d\geq 1}\frac{1}{d^3}e^{-dQ\cdot \omega}.
\end{align}

By using the principle of mirror symmetry, around 1990, Candalas et
al \cite{CDGP} calculated the numbers $N_{0,Q}^{X_5}$ from  formula
(\ref{Multiplecovering}) for quintic Calabi-Yau 3-fold $X_5$, and
found that $N_{0,Q}^{X_5}$ was equal to the number of rational
curves of degree $Q$ in $X_5$ which was hard to compute in
enumerative geometry by classical method. This was the first
important application of the topological string theory in
mathematics.

When $X$ is a toric Calabi-Yau 3-fold which is a toric variety with
trivial canonical bundle \cite{Bou}. Because of its toric symmetry,
the geometric information of a toric Calabi-Yau 3-fold is encoded in
a trivalent graph named ``toric diagram" \cite{AKMV} which is the
gluing of some trivalent vertices. The topological string partition
function $Z^{X}(g_s,\omega)=\exp(F^X(g_s,\omega))$ of a toric
Calabi-Yau 3-fold $X$ can be computed by using the method of
topological vertex \cite{AKMV,LLLZ}. The integrality of the
invariants $N_{g,Q}^{X}$ for toric Calabi-Yau 3-fold $X$ determined
by Gopakumar-Vafa formula (\ref{GVformula}) was  later proved by P.
Peng \cite{Peng} and Konishi \cite{Konishi}.

\subsection{Open strings and Ooguri-Vafa conjecture}
Now we discuss the open topological strings. Let $X$ be a Calabi-Yau
3-fold with a submanifold $\mathcal{D}$, we assume dim
$H_{1}(\mathcal{D},\mathbb{Z})=n$ with basis $\gamma_1,...\gamma_n$.
It is also expected that there are open Gromov-Witten invariants
$K_{\vec{\mu},g,Q}^{(X,\mathcal{D})}$ determined by topological data
$g,\vec{\mu}, Q$, such that $K_{\vec{\mu},g,Q}^{(X,\mathcal{D})}$ is
the virtual counting of holomorphic maps $f$ from genus $g$ Riemann
surface $C_g$ with boundary $\partial C_g$ to $(X,\mathcal{D})$,
such that $f_*([C_g])=Q\in H_2(X,\mathcal{D})$ and $f_*([\partial
C_g])=\sum_{i=1}^n\sum_{j\geq 1}\mu_j^i\gamma_i\in
H_1(\mathcal{D},\mathbb{Z})$. There are no general theory for open
Gromov-Witten invariants, but see \cite{LS,KL} for mathematical
aspects of defining these invariants in special cases.

 The total
free energy and partition function of open topological string on $X$
are defined by as follow
\begin{align} \label{partion-freeenergy}
F^{(X,\mathcal{D})}(\mathbf{x}^1,..,\mathbf{x}^n;g_s,\omega)&=-\sum_{g\geq
0}\sum_{\vec{\mu} \in \mathcal{P}^n\setminus
\{0\}}\frac{\sqrt{-1}^{l(\mu)}}{|Aut(\vec{\mu})|}g_{s}^{2g-2+l(\vec{\mu})}
\sum_{Q\neq 0}K_{\vec{\mu},g,Q}^{(X,\mathcal{D})}e^{-Q\cdot
\omega}\prod_{i=1}^n p_{\mu^i}(\mathbf{x}^i)\\\nonumber
Z^{(X,\mathcal{D})}(\mathbf{x}^1,..,\mathbf{x}^n;g_s,\omega)&=\exp(F^{(X,\mathcal{D})}(\mathbf{x}^1,..,\mathbf{x}^n;g_s,\omega)).
\end{align}
We would like to calculate the partition function
$Z^{(X,\mathcal{D})}(\mathbf{x}^1,..,\mathbf{x}^n;g_s,\omega)$ or
the open Gromov-Witten invariants
$K_{\vec{\mu},g,Q}^{(X,\mathcal{D})}$. For compact Calabi-Yau
3-folds, such as the quintic $X_5$, there are only a few works
devoted to the study of its open Gromov-Witten invariants, for
example, a complete calculation of the disk invariants of $X_5$ with
boundary in a real Lagrangian was given in \cite{PSW}.

Suppose $X$ is a toric Calabi-Yau 3-fold, and $\mathcal{D}$ is a
special Lagrangian submanifold named as Aganagic-Vafa A-brane in the
sense of \cite{AV,AKV}. The open string partition function
$Z^{(X,\mathcal{D})}(\mathbf{x};g_s,\omega)$ can be computed by the
method of topological vertex \cite{AKMV,LLLZ} or topological
recursion developed by Eynard and Orantin \cite{EO1}. The second
approach was first proposed by Mari\~no \cite{Mar}, and studied
further by Bouchard, Klemm, Mari\~no and Pasquetti \cite{BKMP}, the
equivalence of these two methods was proved in \cite{EO2,FLZ}.

The open Gromov-Witten invariants
$K_{\vec{\mu},g,Q}^{(X,\mathcal{D})}$ are rational numbers in
general. Just as in the closed string case \cite{GV1}, the open
topological strings compute the partition function of BPS domain
walls in a related superstring theory \cite{OV}. Ooguri and Vafa
made the prediction that there are integers $N_{\vec{\mu};i,j}$ (OV
invariants) such that
\begin{align}
F^{(X,\mathcal{D})}(\mathbf{x}^1,..,\mathbf{x}^n;g_s,\omega)=
\sum_{d\geq 1}\sum_{\vec{\mu}\in \mathcal{P}^n\setminus
\{0\}}\frac{1}{d}\sum_{i,j}\frac{N_{\vec{\mu},i,j}a^{\frac{di}{2}}q^{\frac{dj}{2}}}{q^{\frac{d}{2}}-q^{-\frac{d}{2}}}
s_{\vec{\mu}}(\vec{\mathbf{x}}),
\end{align}
where $q=e^{\sqrt{-1}g_s}$ and $a=e^{-\omega}$.

Cleanly, one can formulate Ooguri-Vafa conjecture by using the
Plethystic logarithm Log
\begin{conjecture} \label{OVConj}
Let
\begin{align}
f_{\vec{\mu}}(q,a)=(q^{1/2}-q^{-1/2})\langle
\text{Log}(Z^{(X,\mathcal{D})}(\mathbf{x}^1,..,\mathbf{x}^n;q,a)),s_{\vec{\mu}}(\vec{\mathbf{x}})
\rangle,
\end{align}
then we have
\begin{align}
f_{\vec{\mu}}(q,a)=\sum_{i,j}N_{\vec{\mu},i,j}a^{\frac{i}{2}}q^{\frac{j}{2}}
\in \mathbb{Z}[q^{\pm \frac{1}{2}},a^{\pm \frac{1}{2}}].
\end{align}
\end{conjecture}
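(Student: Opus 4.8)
The plan is to prove Conjecture \ref{OVConj} in two stages: first reduce the asserted integrality to a transparent statement in the power-sum basis, and then split the remaining claim into Laurent-polynomiality and integrality of the coefficients. I would begin by passing to the power-sum basis, where the plethystic operators act simply. By definition $\log Z^{(X,\mathcal{D})}=F^{(X,\mathcal{D})}$, and $F^{(X,\mathcal{D})}$ is already written as a $\mathbb{Q}(q^{1/2},a^{1/2})$-linear combination of the products $\prod_i p_{\mu^i}(\mathbf{x}^i)$. Since $\Psi_d$ sends $p_n\mapsto p_{nd}$, $q\mapsto q^d$, $a\mapsto a^d$, the plethystic logarithm $\text{Log}(Z)=\sum_{d\ge1}\frac{\mu(d)}{d}\Psi_d(\log Z)$ becomes a M\"obius-inverted reorganization of the multi-cover sum. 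Pairing with $s_{\vec{\mu}}$ via the Frobenius formula \eqref{Frobeniusformula} and multiplying by $(q^{1/2}-q^{-1/2})$ then isolates exactly $\sum_{i,j}N_{\vec{\mu},i,j}a^{i/2}q^{j/2}$. Thus it suffices to show that $f_{\vec{\mu}}(q,a)=(q^{1/2}-q^{-1/2})\langle\text{Log}(Z),s_{\vec{\mu}}\rangle$ is (A) a Laurent polynomial and (B) has integer coefficients.

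For part (A) I would use the explicit topological vertex expression for $Z^{(X,\mathcal{D})}$ \cite{AKMV,LLLZ}: it is a finite sum of products of framing factors and specializations $s_\lambda(q^\rho)$, hence a rational function in $q^{1/2}$ (and $a^{1/2}$) whose only poles along $q^{d/2}-q^{-d/2}$ come from the multi-cover structure. The M\"obius inversion inside $\text{Log}$ retains only the primitive $d=1$ contribution, so the sole surviving denominator is $q^{1/2}-q^{-1/2}$, which is cancelled by the prefactor; boundedness of the $q$-degrees of the $s_\lambda(q^\rho)$ and of the framing factors then yields finiteness of the nonzero $N_{\vec{\mu},i,j}$ and hence Laurent-polynomiality.

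Part (B), the integrality of the coefficients, is the main obstacle and is precisely the general Ooguri-Vafa/LMOV integrality problem. I see two routes. The general route follows Liu--Peng \cite{LP}: establish Frobenius-type congruences relating $\Psi_p$ modulo $p$ to the $p$-th power operation (Adams operations realized as power operations, reflecting a cyclic $\mathbb{Z}/p$-action on tensor powers), and then feed these into a number-theoretic integrality lemma for the M\"obius sum $\sum_d\frac{\mu(d)}{d}\Psi_d$. The delicate point is controlling the denominators $\mathfrak{z}_\mu$ and the characters $\chi_\lambda(\mu)$ introduced by the Schur-to-power change of basis, so that the final pairing descends from $\mathbb{Q}$ to $\mathbb{Z}$. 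The geometric route, in the spirit of this paper, would instead identify $N_{\vec{\mu},i,j}$ with dimensions of $S_n$-isotypic components of the compactly supported cohomology of Nakajima quiver varieties, exactly as carried out for $(\mathbb{C}^3,\mathcal{D}_\tau)$ with $\tau\le0$ in Theorem \ref{mainthm}; integrality would then be automatic, being dimensions of vector spaces.

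The hard part is therefore the $\mathbb{Q}\to\mathbb{Z}$ descent in part (B). In the geometric route this is free, but its own obstacle is that a toric Calabi-Yau/quiver-variety dictionary is available only in the toy model treated here and is not known in general, so for an arbitrary $(X,\mathcal{D})$ the congruence route of \cite{LP} remains the honest approach. I expect the decisive technical difficulty to be organizing the $S_n$-character combinatorics of the Frobenius formula against the Adams/M\"obius combinatorics tightly enough that the congruences survive the pairing with $s_{\vec{\mu}}$, which is the step where integrality is genuinely created rather than merely bookkept.
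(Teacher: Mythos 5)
There is a basic mismatch you should be aware of: Conjecture \ref{OVConj} is stated in the paper \emph{as a conjecture}, and the paper contains no proof of it --- nor is one known in this generality. What the paper actually proves is only the special case of the model $(\mathbb{C}^3,\mathcal{D}_\tau)$ with framing $\tau\le 0$ (Theorem \ref{mainthm}), by identifying the coefficients $N_{n,k}(\tau)$ with dimensions of $S_n$-invariant parts of compactly supported cohomology of explicit Nakajima quiver varieties, via the theorem of Hausel--Letellier--Rodriguez-Villegas \cite{HLRV2}; even in that toy model the case $\tau\ge 1$ is left open and is instead related to the Rogers--Ramanujan identities. Your text, correspondingly, is not a proof but a plan: part (B), which you yourself describe as ``precisely the general Ooguri-Vafa/LMOV integrality problem,'' is never carried out. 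You sketch two routes --- Liu--Peng-style congruences \cite{LP}, or a geometric identification with quiver varieties --- and then correctly observe that the first has only been executed in the Chern--Simons/resolved-conifold setting and the second exists only for the toy model treated here. Naming the decisive missing step does not supply it; integrality is, as you put it, ``genuinely created'' exactly at the point where your argument stops.

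There is also a concrete gap in part (A). The assertion that ``the M\"obius inversion inside Log retains only the primitive $d=1$ contribution, so the sole surviving denominator is $q^{1/2}-q^{-1/2}$'' is unjustified, and in fact it is essentially the statement to be proved rather than a step toward it. The free energy $\log Z$ contains, for every $d\ge 1$, multi-cover terms with denominators $q^{d/2}-q^{-d/2}$, whose zeros lie at roots of unity; the plethystic $\text{Log}$ does not formally delete these poles, and the conjecture asserts precisely that they cancel in the full M\"obius-inverted sum. Verifying such cancellation at roots of unity is the hard arithmetic core of the known integrality proofs in the closed toric case (Peng \cite{Peng}, Konishi \cite{Konishi}) and of \cite{LP}, not a consequence of bookkeeping. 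Similarly, ``boundedness of the $q$-degrees'' does not follow from the vertex expression in general: finiteness of the nonzero $N_{\vec{\mu},i,j}$ is itself part of the conjecture (compare Conjecture \ref{Mainconj}, whose entire content for the toy model is that only finitely many $N_{m,k}(\tau)$ are nonzero). So both halves of your reduction remain open. As a research outline your proposal is reasonable --- its geometric route is exactly the strategy the paper carries out for $\tau\le 0$ --- but it does not constitute a proof of Conjecture \ref{OVConj}, and no such proof exists in the paper to compare it against.
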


\begin{remark}
These OV invariants $N_{\vec{\mu},i,j}$ were further refined to be
the invariants $n_{\vec{\mu},g,Q}$  in \cite{LM1,LM2,LMV}. See
\cite{LZ} for a more recent discussion about the LMOV invariants
$n_{\vec{\mu},g,Q}$.
\end{remark}

\subsection{Open string model on $\mathbb{C}^3$}

In this subsection, we focus on the open string model on
$\mathbb{C}^{3}$ with Aganagic-Vafa A-brane $\mathcal{D}_\tau$,
where $\tau\in \mathbb{Z}$ denotes the framing \cite{AV,AKV}. The
topological (open) string partition function of
$(\mathbb{C}^3,\mathcal{D}_\tau)$ is given by the Mari\~no-Vafa
formula \cite{MV} which was proved by \cite{LLZ} and \cite{OP}
respectively:
\begin{align}
Z^{(\mathbb{C}^3,\mathcal{D}_\tau)}(\mathbf{x};q)=\sum_{\lambda\in
\mathcal{P}}\mathcal{H}_{\lambda}(q;\tau) s_{\lambda}(\mathbf{x}),
\end{align}
and where
\begin{align}
\mathcal{H}_\lambda(q;\tau)=(-1)^{|\lambda|\tau}q^{\frac{\kappa_\lambda\tau}{2}}\prod_{x\in
\lambda}\frac{q^{cn(x)/2}}{q^{hl(x)/2}-q^{-hl(x)/2}},
\end{align}
where
$\kappa_{\lambda}=\sum_{i=1}^{l(\lambda)}\lambda_i(\lambda_i-2i+1)$.

The partition function
$Z^{(\mathbb{C}^3,\mathcal{D}_\tau)}(\mathbf{x};q)$ is in fact a
certain generating function of terms which are the coefficients of
highest order of $a$ in the corresponding terms appearing in the
open string partition function of the resolved conifold. That's why
the parameter $a$ does not appear in the expression
$Z^{(\mathbb{C}^3,\mathcal{D}_\tau)}(\mathbf{x};q)$. We refer to
\cite{LZ} for more details.

Applying the Ooguri-Vafa Conjecture \ref{OVConj} to
$Z^{(\mathbb{C}^3,\mathcal{D}_\tau)}(g_s,\mathbf{x})$, it follows
that for any $\tau\in \mathbb{Z}$ and $\mu \in \mathcal{P}_{+}$, we
have
\begin{align} \label{OVConjC3}
f_{\mu}^{\tau}(q)=(q^{\frac{1}{2}}-q^{-\frac{1}{2}})\langle
\text{Log}(Z^{(\mathbb{C}^3,\mathcal{D}_\tau)}(\mathbf{x};q)),s_{\mu}(\mathbf{x})
\rangle \in \mathbb{Z}[q^{\pm \frac{1}{2}}].
\end{align}

In particular, if we let $\mathbf{x}=(x,0,0,...)$, then
\begin{align} \label{partionC3}
Z^{(\mathbb{C}^3,\mathcal{D}_\tau)}(g_s,\mathbf{x}=(x,0,0,..))&=\sum_{n\geq
0}\mathcal{H}_{(n)}(q;\tau)x^n\\\nonumber
 &=\sum_{n\geq
0}\frac{(-1)^{n(\tau-1)}q^{\frac{n(n-1)}{2}\tau+\frac{n^2}{2}}}{(1-q)(1-q^2)\cdots(1-q^n)}x^n,
\end{align}
and
\begin{align} \label{fntau1}
f_{n}^{\tau}(q):=f_{(n)}^{\tau}(q)=(q^{\frac{1}{2}}-q^{-\frac{1}{2}})[x^n]\text{Log}\left(\sum_{n\geq
0}\frac{(-1)^{n(\tau-1)}q^{\frac{n(n-1)}{2}\tau+\frac{n^2}{2}}}{(1-q)(1-q^2)\cdots(1-q^n)}x^n\right)
\end{align}
Therefore, formula (\ref{OVConjC3}) implies that for any $\tau \in
\mathbb{Z}$ and $n\in \mathbb{Z}_{\geq 1}$,
\begin{align}
f_{n}^{\tau}(q)\in \mathbb{Z}[q^{\pm \frac{1}{2}}].
\end{align}
Therefore, we formulate the Ooguri-Vafa conjecture for
$(\mathbb{C}^3,\mathcal{D}_\tau)$ as follow
\begin{conjecture} \label{Mainconj-section}
For any $\tau\in \mathbb{Z}$, for a fixed integer $n\geq 1$, we have
\begin{align} \label{fntau2}
f_{n}^{\tau}(q)=\sum_{k\in
\mathbb{Z}}N_{n,k}(\tau)q^{\frac{k}{2}}\in \mathbb{Z}[q^{\pm
\frac{1}{2}}].
\end{align}
In other words, for a fixed integer $n\geq 1$, there are only
finitely many $k$, such that the integers $N_{n,k}(\tau)$ are
nonzero.
\end{conjecture}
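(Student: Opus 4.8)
The plan is to prove the integrality and the Laurent-polynomiality of $f_n^\tau(q)$ for every $\tau\in\mathbb{Z}$ by splitting the integer line into two overlapping ranges and treating each by a quiver-theoretic factorization of $Z^{(\mathbb{C}^3,\mathcal{D}_\tau)}$. For $\tau\leq 0$ I would use Theorem \ref{mainthm}: the Nakajima quiver variety $\mathcal{Q}_{\tilde{n}(1-\tau)}$ with $1-\tau\geq 1$ legs is well defined, and its $S_n$-invariant compactly supported cohomology produces the $N_{n,j}(\tau)$ as signed Betti numbers, which are manifestly integers and finite in number. The new input is the complementary range $\tau\geq -1$, which in particular settles the open case $\tau\geq 1$.

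For $\tau\geq -1$, I would first rewrite the generating series. Writing $(q;q)_n=(1-q)\cdots(1-q^n)$ and collecting the exponent,
\[
Z^{(\mathbb{C}^3,\mathcal{D}_\tau)}(x;q)=\sum_{n\geq 0}\frac{(-1)^{(\tau-1)n}\,q^{\frac{(\tau+1)n^2-\tau n}{2}}}{(q;q)_n}\,x^n .
\]
Setting $\ell=\tau+1\geq 0$ and performing the monomial substitution $x\mapsto q^{-\tau/2}x$, this becomes
\[
\sum_{n\geq 0}\frac{(-q^{1/2})^{\ell n^2}}{(q;q)_n}\,(q^{-\tau/2}x)^n ,
\]
which is exactly the motivic generating series of the symmetric quiver on one vertex with $\ell=\tau+1$ loops; the signs agree because $(-q^{1/2})^{\ell n^2}=(-1)^{\ell n}q^{\ell n^2/2}$ and $(-1)^{(\tau+1)n}=(-1)^{(\tau-1)n}$. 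The substitution $x\mapsto q^{-\tau/2}x$ satisfies $\Psi_d(q^{-\tau/2}x)=(q^{-\tau/2}x)^d$, so it is compatible with the Adams operators $\Psi_d$ and hence with the plethystic $\text{Exp}/\text{Log}$ of Section \ref{subsection-plethystic}.

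Then I would invoke the Kontsevich--Soibelman integrality, proved by Efimov, for symmetric quivers: the motivic series factors as $\text{Exp}\big((q^{1/2}-q^{-1/2})^{-1}\sum_{n\geq 1}\Omega_n(q)\,(q^{-\tau/2}x)^n\big)$ with each motivic Donaldson--Thomas invariant $\Omega_n(q)$ a Laurent polynomial in $q^{\pm 1/2}$ with non-negative integer coefficients. Comparing with the defining relation $Z^{(\mathbb{C}^3,\mathcal{D}_\tau)}=\text{Exp}\big((q^{1/2}-q^{-1/2})^{-1}\sum_n f_n^\tau(q)\,x^n\big)$ and using that $\text{Log}$ is injective, the arguments must coincide termwise, giving $f_n^\tau(q)=q^{-\tau n/2}\,\Omega_n(q)\in\mathbb{Z}[q^{\pm 1/2}]$, a genuine Laurent polynomial. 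This is Conjecture \ref{Mainconj-section} for $\tau\geq -1$. Since $\{\tau\leq 0\}\cup\{\tau\geq -1\}=\mathbb{Z}$, the two ranges together establish the statement for all $\tau$, the overlap $\tau\in\{-1,0\}$ serving as a consistency check (the single-vertex DT numbers must reproduce the $S_n$-invariant Betti numbers). For $\tau=1$ the quiver is the two-loop vertex, whose series is $\sum_n q^{n^2}(q;q)_n^{-1}y^n$, and the Efimov factorization is precisely the refined product formula (\ref{deformedRRConjecturein2}), with the Rogers--Ramanujan identities (\ref{rr1})--(\ref{rr2}) recovered at $y=1$; the non-negativity of $\Omega_n$ explains the observed sign rule $(-1)^m N_{m,k}(1)\geq 0$.

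The main obstacle is making the $\tau\geq -1$ identification fully rigorous at the level of rings. I must check that the paper's plethystic $\text{Exp}/\text{Log}$, built from $\Psi_d(q^{1/2})=q^{d/2}$, agrees with the $\lambda$-ring structure on the motivic variable used in the Kontsevich--Soibelman/Efimov theorem, and that the denominator $(q;q)_n$ together with the sign $(-q^{1/2})^{\ell n^2}$ is normalized so that the resulting $\Omega_n$ land in $\mathbb{Z}[q^{\pm 1/2}]$ and are truly finite rather than power series. Equivalently, one must confirm that the top-degree-in-$a$ specialization passing from the framed-unknot (resolved conifold) partition function to the reduced $\mathbb{C}^3$ series preserves this single-vertex symmetric quiver exactly; any sign or $q$-versus-$q^2$ mismatch would have to be absorbed into the monomial substitution before Efimov's positivity can be applied. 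This convention-matching, rather than any new geometric input, is where the real work lies, and it is the point at which the Rogers--Selberg reformulation suggested by Warnaar becomes the natural bookkeeping device.
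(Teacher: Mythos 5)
Your proposal is correct in substance, but it takes a genuinely different route from the paper --- and in fact it would prove strictly more than the paper does. Keep in mind that the statement is a \emph{conjecture} in the paper: the paper's own argument (Theorem \ref{mainthm}, resting on the Hausel--Letellier--Rodriguez-Villegas positivity theorem applied to the one-vertex quiver with $1-\tau$ \emph{legs}, and the Weyl-invariant compactly supported cohomology of the associated Nakajima quiver variety) covers only $\tau\leq 0$, while the range $\tau\geq 1$ is left open and is merely reformulated as the deformed Rogers--Ramanujan identity (\ref{deformedRRConjecturein2}). Your complementary argument for $\tau\geq -1$ is sound: the exponent does collect to $\frac{(\tau+1)n^2-\tau n}{2}$; the substitution $x\mapsto q^{-\tau/2}x$ sends a monomial to a monomial and hence commutes with the Adams operations, so it passes through $\text{Log}$; the sign bookkeeping $(-1)^{\ell n^2}=(-1)^{\ell n}=(-1)^{(\tau-1)n}$ for $\ell=\tau+1$ is right, so the partition function is exactly the motivic Donaldson--Thomas series $\sum_{n\geq 0}(-q^{1/2})^{\ell n^2}(q;q)_n^{-1}y^n$ of the one-vertex symmetric quiver with $\ell\geq 0$ loops; and Efimov's theorem (the Kontsevich--Soibelman conjecture: the cohomological Hall algebra of a symmetric quiver is free supercommutative on $V^{\mathrm{prim}}\otimes\mathbb{C}[u]$ with each graded piece of $V^{\mathrm{prim}}$ finite-dimensional) then makes each $\Omega_n(q)$ an honest Laurent polynomial with integer coefficients, whence $f_n^\tau(q)=q^{-\tau n/2}\Omega_n(q)\in\mathbb{Z}[q^{\pm\frac{1}{2}}]$. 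The paper's own data corroborate the identification: $\tau=0$ gives $\ell=1$ (the Jordan quiver, with only $\Omega_1$ nonzero, matching $N_{m,k}(0)=\delta_{m,1}\delta_{k,0}$), and for $\tau=1$ ($\ell=2$) the values $g_m(1)=1,1,1,2,5,13$ computed in Section \ref{section-RR} agree with the numerical DT invariants of the two-loop quiver. Your worry about conventions is the right one to have, but it is only bookkeeping, not a gap: every standard normalization of the KS/Efimov factorization differs from the paper's (\ref{infiniteprod}) by a fixed sign and monomial per $x$-degree (denominator $1-q$ versus $q^{1/2}-q^{-1/2}$, or $q\leftrightarrow q^{-1}$), and no such change can affect integrality or finiteness. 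Comparing what each approach buys: the paper's legs/HLRV route gives a geometric interpretation of $N_{n,k}(\tau)$ as Betti numbers of Nakajima quiver varieties but is structurally confined to $\tau\leq 0$ (it needs $1-\tau\geq 1$ legs); your loops/COHA route needs $\ell=\tau+1\geq 0$ loops, so the two quiver models together cover all of $\mathbb{Z}$, your route settles the $\tau=1$ case that the paper only conjectures (in particular Conjecture \ref{deformedRRConj}), and Efimov's positivity explains the observed sign rule $(-1)^m N_{m,k}(1)\geq 0$ that the paper records only as an empirical observation.
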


Now identity (\ref{fntau1}) is equivalent to
\begin{align}
\text{Log}\left(\sum_{n\geq
0}\frac{(-1)^{n(\tau-1)}q^{\frac{n(n-1)}{2}\tau+\frac{n^2}{2}}}{(1-q)(1-q^2)\cdots(1-q^n)}x^n\right)=\sum_{n\geq
0}\sum_{k\in
\mathbb{Z}}\frac{N_{n,k}(\tau)q^{\frac{k}{2}}}{(q^{\frac{1}{2}}-q^{-\frac{1}{2}})}x^n.
\end{align}

By using the properties of plethystic operators introduced in
Section \ref{subsection-plethystic}, we can write (\ref{partionC3})
in the form of infinite product as follow:
\begin{align} \label{infiniteprod}
\sum_{n\geq
0}\frac{(-1)^{n(\tau-1)}q^{\frac{n(n-1)}{2}\tau+\frac{n^2}{2}}}{(1-q)(1-q^2)\cdots(1-q^n)}x^n&=\text{Exp}\left(
\sum_{n\geq 0}\sum_{k\in
\mathbb{Z}}\frac{N_{n,k}(\tau)q^{\frac{k}{2}}}{(q^{\frac{1}{2}}-q^{-\frac{1}{2}})}x^n\right)\\\nonumber
&=\text{Exp}\left(- \sum_{n\geq 0}\sum_{k\in \mathbb{Z}}\sum_{l\geq
0}N_{n,k}(\tau)q^{\frac{k}{2}}q^{\frac{1}{2}+l}x^n\right)\\\nonumber
&=\prod_{n\geq 0}\prod_{k\in \mathbb{Z}}\prod_{l\geq
0}\left(1-q^{\frac{k}{2}}q^{\frac{1}{2}+l}x^n\right)^{N_{n,k}(\tau)},
\end{align}
where we have used the formal series
\begin{align} \label{formalexpansion}
\frac{1}{1-q}=1+q+q^2+\cdots.
\end{align}
\begin{remark}
When we write the formula (\ref{partionC3}) into the form of
infinite product, one can also use the formal series
\begin{align}
\frac{1}{1-q^{-1}}=1+q^{-1}+q^{-2}+\cdots.
\end{align}
In order to make the connections to the Rogers-Ramanujan identities,
here we need the infinite product form (\ref{infiniteprod}) by using
the expansion (\ref{formalexpansion}).
\end{remark}

In the following two sections, we will show that when $\tau\in
\mathbb{Z}$ and $\tau\leq 0$, these integers $N_{n,k}(\tau)$ can be
interpreted as the Betti numbers of certain cohomologies of quiver
varieties, which finishes the proof of Conjecture
\ref{Mainconj-section} for $\tau\leq 0$. As to the case of $\tau\geq
1$, we study carefully for the special case of  $\tau=1$, and we
find that these integers $N_{n,k}(1)$ together with formula
(\ref{infiniteprod}) give a deformed version of the famous
Rogers-Ramanujan identities (\ref{rr1}) and (\ref{rr2}).

\section{Cohomologies of quiver varieties} \label{Section-quiver}
Motivated by the previous works in gauge theory \cite{Kr,KN}, H.
Nakajima \cite{N1,N2} introduced the quiver varieties and
illustrated how to use them to construct the geometric
representations of Kac-Moody algebras. From then on, quiver
varieties became to be the central objects in mathematics, we refer
to \cite{Kir} for the introduction to quiver varieties.  Quiver
varieties have a lot of structures and applications, for example,
they can be used to prove the famous Kac's conjectures \cite{Kac}.
\subsection{Kac's conjecture}
We follow the notations in \cite{HLRV1,HLRV2}. Take a ground field
$\mathbb{K}$, denote by $\Gamma=(I,\Omega)$ a quiver with
$I=\{1,...,n\}$ the set of vertices, and $\Omega$ the set of edges
of $\Gamma$. For $\gamma\in \Omega$, let $h(\gamma), t(\gamma)\in I$
denote the head and tail of $\gamma$. A representation of $\Gamma$
of dimension $\mathbf{v}=\{v_i\}_{i\in I}\in (\mathbb{Z}_{\geq
0})^{n}$ over $\mathbb{K}$ is a collection of $\mathbb{K}$-linear
maps $\phi_{\gamma}: \mathbb{K}^{v_{t(\gamma)}}\rightarrow
\mathbb{K}^{v_{h(\gamma)}}$ for each $\gamma\in \Omega$ that can be
identified with matrices by using the canonical base of
$\mathbb{K}^m$. A representation is said to be absolutely
indecomposable over $\mathbb{K}$, if it is nontrivial and not
isomorphic to a direct sum of two nontrivial representations of
$\Gamma$ over $\mathbb{K}$. A indecomposable representation is said
to be absolutely indecomposable over $\mathbb{K}$, if it is still
indecomposable over any extension field of $\mathbb{K}$.

In order to study the representation theory of general quiver
$\Gamma$,  Kac \cite{Kac} introduced $A_{\mathbf{v}}(q)$, the number
of isomorphic classes of absolutely indecomposable representations
of $\Gamma$ with dimension $\mathbf{v}=(v_1,...,v_n)$ over finite
field $\mathbb{F}_q$. Following the idea of \cite{Kac}, J. Hua
firstly computed the Kac polynomial $A_{\mathbf{v}}(q)$ in the
following form:
\begin{align} \label{Huaformula}
&\sum_{\mathbf{v}\in \mathbb{Z}_{\geq 0}^n\
\{0\}}A_{\mathbf{v}}(q)\prod_{i=1}^nT_i^{v_i}=(q-1)\\\nonumber \cdot
&\text{Log}\left(\sum_{(\pi^1,...,\pi^n)\in
\mathcal{P}^n}\frac{\prod_{\gamma\in \Omega}q^{\langle
\pi^{t(\gamma)},\pi^{h(\gamma)}\rangle}}{\prod_{i\in I}q^{\langle
\pi^i,\pi^i\rangle}\prod_{k\geq
1}\prod_{j=1}^{m_k(\pi^i)}(1-q^{-j})}\prod_{i=1}^{n}T_i^{|\pi^i|}\right)
\end{align}
where $\langle , \rangle$ is the pairing on partitions defined by
\begin{align}
\langle \lambda, \mu
\rangle=\sum_{i,j}\text{min}(i,j)m_i(\lambda)m_j(\mu).
\end{align}
Kac \cite{Kac} proved that $A_{\mathbf{v}}(q)$ has integer
coefficients and made two remarkable conjectures:

(i) If $\Gamma$ has no edge-loops, then the constant term of
$A_{\mathbf{v}}(0)$ is equal to the multiplicity of the root
$\mathbf{v}$ in the corresponding Kac-Moody algebra
$\mathfrak{g}(\Gamma)$.

(ii) The Kac polynomial $A_{\mathbf{v}}(q)$ has nonnegative
coefficients.

Conjecture (i) was proved by Hausel \cite{Hau} and Conjecture (ii)
was completely settled  by T. Hausel, E. Letellier and F.
Rodriguez-Villegas \cite{HLRV2} by using the theory of Nakajima
quiver varieties and computing via arithmetic Fourier transform.
They introduced the following function which largely generalizes
Hua's formula (\ref{Huaformula})
\begin{align}
&\mathbb{H}(\mathbf{x}^1,...,\mathbf{x}^n;q):=(q-1)\\\nonumber \cdot
&\text{Log}\left(\sum_{(\pi^1,...,\pi^n)\in
\mathcal{P}^n}\frac{\prod_{\gamma\in \Omega}q^{\langle
\pi^{t(\gamma)},\pi^{h(\gamma)}\rangle}}{\prod_{i\in I}q^{\langle
\pi^i,\pi^i\rangle}\prod_{k\geq
1}\prod_{j=1}^{m_k(\pi^i)}(1-q^{-j})}\prod_{i=1}^{n}\tilde{H}_{\pi^i}(\mathbf{x}^i;q)\right),
\end{align}
where $\tilde{H}_{\pi^i}(\mathbf{x}^i;q)$ is the (transformed)
Hall-Littlewood polynomial introduced in \cite{GH}.

For $s_{\vec{\mu}}(\vec{\mathbf{x}}):=s_{\mu^1}(\mathbf{x}^1)\cdots
s_{\mu^n}(\mathbf{x}^n)$, we let
\begin{align}
\mathbb{H}_{\vec{\mu}}^s(q):=\langle\mathbb{H}(\mathbf{x}^1,..,\mathbf{x}^n;q),s_{\vec{\mu}}(\vec{\mathbf{x}})
\rangle.
\end{align}

\subsection{The quiver varieties $\mathcal{Q}_{\tilde{\mathbf{v}}}$}
In their remarkable work \cite{HLRV2},  Hausel et al found the
geometric interpretation of $\mathbb{H}_{\vec{\mu}}^s(q)$ by
computing, via arithmetic Fourier transform,  the dimension of
certain cohomologies of Nakajima quiver varieties. Let us briefly
recall the main results in \cite{HLRV2}.

 We denote the space of all the representations of
$\Gamma$ over $\mathbb{K}$ with dimension $\mathbf{v}$  by
\begin{align}
\text{Rep}_{\mathbb{K}}(\Gamma,\mathbf{v}):=\bigoplus_{\gamma\in
\Omega}\text{Mat}_{v_{h(\gamma)},v_{t(\gamma)}}(\mathbb{K}).
\end{align}
Let $\text{GL}_{\mathbf{v}}=\prod_{i\in
I}\text{GL}_{v_i}(\mathbb{K})$ and
$\mathfrak{gl}_{\mathbf{v}}=\prod_{i\in
I}\mathfrak{gl}_{v_i}(\mathbb{K})$. The algebraic group
$\text{GL}_{\mathbf{v}}$ acts on
$\text{Rep}_{\mathbb{K}}(\Gamma,\mathbf{v})$ as
\begin{align}
(g\cdot \phi)_{\gamma}=g_{v_{h(\gamma)}}
\phi_{\gamma}g_{v_{t(\gamma)}}^{-1}
\end{align}
for any $g=(g_i)_{i\in I}\in \text{GL}_{\mathbf{v}}$,
$\phi=(\phi_\gamma)_{\gamma\in \Omega}\in
\text{Rep}_{\mathbb{K}}(\Gamma,\mathbf{v})$. Since the diagonal
center $(\lambda I_{v_i})_{i\in I}\in \text{GL}_{\mathbf{v}}$ acts
trivially on $\text{Rep}_{\mathbb{K}}(\Gamma,\mathbf{v})$, the
action reduces to an action of
$\text{G}_\mathbf{v}=\text{GL}_{\mathbf{v}}/\mathbb{K}^{\times}$.

Let $\overline{\Gamma}$ be the double quiver of $\Gamma$, namely,
$\overline{\Gamma}$ has the same vertices as $\Gamma$, but the set
of edges are given by
$\overline{\Omega}:=\{\gamma,\gamma^*|\gamma\in \Omega\}$, where
$h(\gamma^*)=t(\gamma)$ and $t(\gamma^*)=h(\gamma)$. By the trace
pairing, we may identify
$\text{Rep}_{\mathbb{K}}(\overline{\Gamma},\mathbf{v})$ with the
cotangent bundle
$\text{T}^*\text{Rep}_{\mathbb{K}}(\Gamma,\mathbf{v})$. We define
the moment map
\begin{align}
\mu_{\mathbf{v}}:
&\text{Rep}_{\mathbb{K}}(\overline{\Gamma},\mathbf{v})\rightarrow
\mathfrak{gl}_{\mathbf{v}}^0\\\nonumber &(x_{\gamma})_{\gamma\in
\overline{\Omega}} \mapsto \sum_{\gamma\in
\Omega}[x_{\gamma},x_{\gamma^*}].
\end{align}
where $\mathfrak{gl}_{\mathbf{v}}^0=\{(X_i)_{i\in I}\in
\mathfrak{gl}_\mathbf{v}|\sum_{i\in I}\text{Tr}(X_i)=0\}$ is
identified with the dual of the Lie algebra of
$\text{G}_{\mathbf{v}}$. It is a $\text{G}_{\mathbf{v}}$-equivariant
map. For $\mathbf{\xi}=(\xi_i)_{i\in I}\in \mathbb{K}^I$ such that
$\xi\cdot \mathbf{v}=\sum_{i}\xi_i v_i=0$, then
\begin{align}
(\xi_i I_{v_i})_{i\in I} \in \mathfrak{gl}_{\mathbf{v}}^0.
\end{align}
For  such a $\xi\in \mathbb{K}^I$, the affine variety
$\mu_{\mathbf{v}}^{-1}(\xi)$ inherits a
$\text{G}_{\mathbf{v}}$-action. The quiver variety
$\mathcal{Q}_{\mathbf{v}}$ is the affine GIT quotient
\begin{align}
\mu_{\mathbf{v}}^{-1}(\xi)//\text{G}_{\mathbf{v}}.
\end{align}
The (related) quiver varieties were studied by many authors in past
two decades, for example \cite{N1,N2,Lus,CB}.

Let $\tilde{\Gamma}_{\mathbf{v}}$ on vertex set
$\tilde{I}_{\mathbf{v}}$ be the quiver obtained from
$(\Gamma,\mathbf{v})$ by adding at each vertex $i\in I$ a leg of
length $v_i-1$ with all the edges oriented towards the vertex $i$.
Let $\tilde{\mathbf{v}}\in \mathbb{Z}_{\geq
0}^{\tilde{I}_{\mathbf{v}}}$ be the dimension vector with coordinate
$v_i$ at $i\in I\subset \tilde{I}_\mathbf{v}$ and with coordinates
$(v_i-1,v_i-2,...,1)$ on the leg attached to the vertex $i\in I$. We
let $\mathcal{Q}_{\tilde{\mathbf{v}}}$ be the quiver variety
attached to the quiver $\tilde{\Gamma}_{\mathbf{v}}$ with parameter
$\tilde{\xi}$ such that $\tilde{\xi}\cdot \tilde{\mathbf{v}}=0$.
Denote by $\tilde{C}_{\mathbf{v}}$ the Cartan matrix of the quiver
$\tilde{\Gamma}_{\mathbf{v}}$, then
\begin{align}
d_{\tilde{\mathbf{v}}}:=1-\frac{1}{2}\tilde{\mathbf{v}}^t\tilde{C}_{\mathbf{v}}\tilde{\mathbf{v}}
\end{align}
equals $\frac{1}{2}\dim \mathcal{Q}_{\tilde{\mathbf{v}}} $ if
$\mathcal{Q}_{\tilde{\mathbf{v}}}$ is nonempty.

Let $W_{\mathbf{v}}:=S_{v_1}\times\cdots\times S_{v_n}$ be the Weyl
group of the group $GL_{\mathbf{v}}:=GL_{v_1}\times \cdots \times
GL_{v_n}$, it acts on
$H_c^{*}(\mathcal{Q}_{\tilde{\mathbf{v}}};\mathbb{C})$ by the work
of Nakajima \cite{N1,N2}. We denote by
$\chi^{\vec{\mu}}=\chi^{\mu^1}\cdots \chi^{\mu^n}:
W_{\mathbf{v}}\rightarrow \mathbb{C}^{\times}$ the exterior product
of the irreducible characters $\chi^{\mu^i}$ of the symmetric group
$S_{v_i}$ in the notation of \cite{Mac}. In particular,
$\chi^{(v_i)}$ is the trivial character and $\chi^{(1^{v_i})}$ is
the sign character $\epsilon_i: S_{v_i}\rightarrow \{\pm 1\}$.

The main result of \cite{HLRV2} is
\begin{theorem}[Theorem 1.4 and Corollary 1.5 in \cite{HLRV2}]
We have
\begin{align}
\mathbb{H}_{\vec{\mu}}^{s}(q)=\sum_{i}\langle\rho^{2i},\epsilon
\chi^{\vec{\mu}}\rangle_{W_\mathbf{v}}q^{i-d_{\tilde{\mathbf{v}}}},
\end{align}
where $\langle\rho^{2i},\epsilon
\chi^{\vec{\mu}}\rangle_{W_\mathbf{v}}$ is the multiplicity of
$\epsilon \chi^{\vec{\mu}}$ in the representation $\rho^{2i}$ of
$W_\mathbf{v}$ in
$H_c^{2i}(\mathcal{Q}_{\tilde{\mathbf{v}}};\mathbb{C})$.

In particular, for
$\vec{\mu}=(1^{\mathbf{v}})=((1^{v_1}),...,1^{v_n})\in
\mathcal{P}^n$, we have
\begin{align}
\mathbb{H}_{1^{\mathbf{v}}}^{s}(q)=\sum_{j}\dim(H_c^{2j}(\mathcal{Q}_{\tilde{\mathbf{v}}};\mathbb{C})^{W_{\mathbf{v}}})q^{j-d_{\tilde{\mathbf{v}}}}
\end{align}

\end{theorem}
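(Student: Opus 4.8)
The plan is to realize both sides as specializations of point counts on the quiver variety $\mathcal{Q}_{\tilde{\mathbf{v}}}$ over finite fields, and then to invoke purity to convert the arithmetic count into cohomological dimensions. First I would fix a finite field $\mathbb{F}_q$ together with a generic central parameter $\tilde\xi$ (so that $\tilde\xi\cdot\tilde{\mathbf{v}}=0$ but $\tilde\xi$ pairs nontrivially with every proper sub-dimension-vector), which guarantees that $\mathrm{G}_{\tilde{\mathbf{v}}}$ acts freely on the fiber $\mu_{\tilde{\mathbf{v}}}^{-1}(\tilde\xi)$ and the GIT quotient is a smooth affine variety. The central object is the count $|\mu_{\tilde{\mathbf{v}}}^{-1}(\tilde\xi)(\mathbb{F}_q)|$, which I would compute by arithmetic Fourier transform on the symplectic space $\mathrm{Rep}_{\mathbb{F}_q}(\overline{\tilde\Gamma},\tilde{\mathbf{v}})$: pairing a nontrivial additive character $\psi$ against the moment-map value turns the fiber count into a sum $\sum\psi(\langle\tilde\xi,\sum_\gamma[x_\gamma,x_{\gamma^*}]\rangle)$ over all representations, which factors through the adjoint action and reduces to a sum over $\mathrm{GL}_{\tilde{\mathbf{v}}}(\mathbb{F}_q)$-conjugacy classes.

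The second step is to organize that conjugacy-class sum by Jordan type. Parametrizing the semisimple-plus-nilpotent data by partitions $\pi^i$ attached to the vertices, the orbit sizes and the quadratic forms $\langle\pi^i,\pi^i\rangle$ and $\langle\pi^{t(\gamma)},\pi^{h(\gamma)}\rangle$ appearing in the definition of $\mathbb{H}$ emerge exactly from the dimensions of the relevant centralizers and of the $\mathrm{Hom}$-spaces between representations of fixed Jordan type. The transformed Hall–Littlewood polynomials $\tilde H_{\pi^i}(\mathbf{x}^i;q)$ enter because the leg attached at vertex $i$, with its flag of subspaces of lengths $(v_i-1,\dots,1)$, records precisely the Green-polynomial weighting of the flag variety of Jordan type $\pi^i$. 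Taking the plethystic logarithm $\mathrm{Log}$ then isolates the \emph{indecomposable} contribution, reproducing the generating series that defines $\mathbb{H}(\mathbf{x}^1,\dots,\mathbf{x}^n;q)$; pairing against $s_{\vec\mu}$ under the Hall inner product extracts $\mathbb{H}^s_{\vec\mu}(q)$.

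The third and decisive step is purity. I would show that $\mathcal{Q}_{\tilde{\mathbf{v}}}$ is semiprojective — it carries a $\mathbb{C}^\times$-action, induced by scaling the symplectic form, with proper Bialynicki–Birula flow and compact fixed locus — so that $H^*_c(\mathcal{Q}_{\tilde{\mathbf{v}}};\mathbb{C})$ is pure and concentrated in even degrees. Purity forces the $E$-polynomial read off from the point count to coincide with the Poincaré polynomial $\sum_i\dim H_c^{2i}\,q^{i}$ after the normalizing shift by $d_{\tilde{\mathbf{v}}}$. For the refined identity I would run the entire Fourier-transform computation $W_{\mathbf{v}}$-equivariantly, inserting the character $\chi^{\vec\mu}$ into the trace over the permutation action on the leg flags; by Springer-type reciprocity this yields the multiplicity $\langle\rho^{2i},\epsilon\chi^{\vec\mu}\rangle_{W_{\mathbf{v}}}$, the sign twist $\epsilon$ being the standard discrepancy between the geometric Springer character and the Frobenius characteristic. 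Specializing to $\vec\mu=(1^{\mathbf{v}})$, where $\chi^{(1^{v_i})}=\epsilon_i$ so that $\epsilon\chi^{\vec\mu}$ collapses to the trivial character, then selects exactly the $W_{\mathbf{v}}$-invariant subspaces and gives the Corollary.

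The hardest part will be this purity/equivariance step: proving that the mixed Hodge structure on $H^*_c(\mathcal{Q}_{\tilde{\mathbf{v}}})$ is pure and even — so that odd cohomology vanishes and point counts determine Betti numbers — and, at the same time, that the $W_{\mathbf{v}}$-action threaded through the arithmetic Fourier transform matches the geometric Nakajima action degree by degree with the correct sign. Everything else is the bookkeeping of Jordan types, centralizer orders and the plethystic $\mathrm{Log}$ that isolates indecomposables, for which Hua's formula (\ref{Huaformula}) is the exact prototype.
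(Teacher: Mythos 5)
A preliminary but important observation: the paper does not prove this statement at all. It is quoted, with explicit attribution, as Theorem 1.4 and Corollary 1.5 of \cite{HLRV2}, and is then used as a black box in Section \ref{Section-quiver}. So there is no ``paper's own proof'' to compare against; the only meaningful benchmark is the proof in \cite{HLRV2} itself. Measured against that, your outline reconstructs the genuine strategy of Hausel--Letellier--Rodriguez-Villegas in broad strokes: counting $\mathbb{F}_q$-points of the moment-map fiber by arithmetic Fourier transform (the method of \cite{HLRV1}), organizing the resulting sum by Jordan types so that the pairings $\langle\pi,\pi\rangle$ and the transformed Hall--Littlewood polynomials appear, applying the plethystic Log, invoking purity to pass from counting polynomials to compactly supported Betti numbers, and refining the whole computation $W_{\mathbf{v}}$-equivariantly. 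That is the correct skeleton of the argument you would find in \cite{HLRV2}.

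However, the step you yourself call decisive would fail as you state it. You propose to prove purity by exhibiting a $\mathbb{C}^\times$-action on $\mathcal{Q}_{\tilde{\mathbf{v}}}$ itself, ``induced by scaling the symplectic form,'' with proper Bialynicki--Birula flow. No such action exists on the generic-parameter variety: scaling the maps $(x_\gamma,x_{\gamma^*})$ rescales the moment map by $t$ or $t^2$, so it preserves the fiber $\mu_{\tilde{\mathbf{v}}}^{-1}(\tilde\xi)$ only when $\tilde\xi=0$, whereas the genericity you correctly require (for smoothness and freeness of the $\mathrm{G}_{\tilde{\mathbf{v}}}$-action) forces $\tilde\xi\neq 0$. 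The semiprojectivity argument applies to the $\tilde\xi=0$, stability-parameter model, not to the affine GIT quotient at generic $\tilde\xi$; purity for the latter is instead obtained by a specialization argument for the family $\mu_{\tilde{\mathbf{v}}}^{-1}(t\tilde\xi)/\!/\mathrm{G}_{\tilde{\mathbf{v}}}$ over $t\in\mathbb{A}^1$, whose \emph{total space} (not each fiber) carries the $\mathbb{C}^\times$-action --- this is the argument of Crawley-Boevey--Van den Bergh \cite{CBV}, on which \cite{HLRV2} relies. Likewise, matching the $W_{\mathbf{v}}$-action threaded through the Fourier transform with Nakajima's geometric action \cite{N1,N2} degree by degree, including the sign twist $\epsilon$, is the technical heart of \cite{HLRV2}; an appeal to ``Springer-type reciprocity'' names the phenomenon but does not supply the identification. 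You flag both difficulties honestly, but as a proof the proposal stops exactly where the real work of \cite{HLRV2} begins.
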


\subsection{A special case}
For the quiver $\Gamma=(I,\Omega)$, we attach $k_i\geq 1$ infinite
legs to  each vertex $i\in I$ of $\Gamma$. Let
$\mathbf{k}=(k_1,..,k_n)\in \mathbb{Z}_{\geq 1}^n$.  We set all the
arrows on the new legs point towards the vertex. Given a dimension
vector $\mathbf{v}\in \mathbb{Z}_{\geq 0}^n\setminus \{0\}$, one can
also construct a quiver varieties similarly including the previous
construction as the special case of all $k_i=1$. More precisely, let
$\tilde{\Gamma}(\mathbf{k})$  be the quiver obtained from
$(\Gamma,\mathbf{v})$ by adding at each vertex $i\in I$ $k_i$
infinite legs of
 the edges all oriented toward the vertex $i$. Denote by $\tilde{\mathbf{v}}(\mathbf{k})$
 the dimension vector with coordinate $v_i$ at $i\in I$ and with the same coordinates $(v_i-1,v_i-2,..,1,0,0,..)$ on
 the $k_i$ legs attached to the vertex $i\in I$. Now we let
 $\mathcal{Q}_{\tilde{\mathbf{v}}(\mathbf{k})}$ be the quiver variety  associated to quiver
 $\tilde{\Gamma}(\mathbf{k})$.

\begin{corollary} [\cite{HLRV2}, Proposition 3.4, by changing $q\rightarrow q^{-1}$]
We have the identity
\begin{align}
(q^{-1}-1)&\text{Log}\left(\sum_{\mathbf{v}\in \mathbb{Z}_{\geq
0}^n}\frac{q^{\frac{1}{2}(\gamma(\mathbf{v}(\mathbf{k}))+\delta(\mathbf{v}(\mathbf{k})))}}{\prod_{i=1}^n(1-q)\cdots
(1-q^{v_i})}(-1)^{\delta(\mathbf{v}(\mathbf{k}))}\prod_{i=1}^nT_i^{v_i}\right)\\\nonumber
&=\sum_{\mathbf{v}\in \mathbb{Z}_{\geq
0}^n}\mathbb{H}_{1^{\mathbf{v}(\mathbf{k})}}^s(q^{-1})(-1)^{\delta(\mathbf{v}(\mathbf{k}))}\prod_{i=1}^nT_i^{v_i}.
\end{align}
where
\begin{align} \label{HLRVformulaspecial}
\gamma(\mathbf{v}(\mathbf{k}))=\sum_{i=1}^{n}(2-k_i)v_i^2-2\sum_{\gamma
\in \Omega}v_{t(\gamma)}v_{h(\gamma)}, \
\delta(\mathbf{v}(\mathbf{k}))= \sum_{i=1}^nk_iv_i
\end{align}
and
$(1^{\mathbf{v}(\mathbf{k})})=((1^{v_1})^{k_1},...,(1^{k_n})^{k_n})$
where $(1^{v_i})^{k_i}$ denotes that $(1^{v_i})$ appears $k_i$
times.
\end{corollary}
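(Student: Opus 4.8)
The plan is to deduce the identity directly from \cite[Proposition 3.4]{HLRV2} together with the main theorem of \cite{HLRV2} recalled above, the only genuinely new operation being the substitution $q\mapsto q^{-1}$. I would start from the native $q$-form of Proposition 3.4, in which the generating series $\mathbb{H}(\mathbf{x}^1,\dots,\mathbf{x}^n;q)$ assembled from the transformed Hall--Littlewood polynomials $\tilde{H}_{\pi^i}$, specialized to the quiver $\tilde{\Gamma}(\mathbf{k})$ with $k_i$ infinite legs at each vertex and paired against $s_{1^{\mathbf{v}(\mathbf{k})}}$, is already summed over the leg coordinates. On the cohomological side, the main theorem identifies the resulting coefficients $\mathbb{H}^s_{1^{\mathbf{v}(\mathbf{k})}}(q)$ with the generating series of $\dim H_c^{2j}(\mathcal{Q}_{\tilde{\mathbf{v}}(\mathbf{k})};\mathbb{C})^{W_{\mathbf{v}(\mathbf{k})}}$, so the right-hand side of the asserted formula is pinned down once the native $q$-form is in hand.

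The first substantive step is to identify the combinatorial contribution of the infinite legs. Each leg attached to vertex $i$ carries the decreasing dimension vector $(v_i-1,v_i-2,\dots,1,0,0,\dots)$, and summing the Hall--Littlewood data along the leg collapses each leg to the $q$-Pochhammer factor $1/\prod_{j=1}^{v_i}(1-q^{-j})$, the contributions of the $k_i$ legs combining multiplicatively. The accompanying monomial in $q$ is dictated by the pairing $\langle\,,\rangle$ on partitions, i.e. by the Cartan form $\tfrac12\tilde{\mathbf{v}}^t\tilde{C}_{\mathbf{k}}\tilde{\mathbf{v}}$ of the enlarged quiver. I would compute this quadratic form directly: the diagonal terms at the $n$ original vertices and along the legs, together with the edge terms $-2\sum_{\gamma\in\Omega}v_{t(\gamma)}v_{h(\gamma)}$, should telescope into exactly $\gamma(\mathbf{v}(\mathbf{k}))=\sum_i(2-k_i)v_i^2-2\sum_{\gamma\in\Omega}v_{t(\gamma)}v_{h(\gamma)}$, with the total length bookkeeping recorded by $\delta(\mathbf{v}(\mathbf{k}))=\sum_i k_i v_i$; the sign $(-1)^{\delta(\mathbf{v}(\mathbf{k}))}$ is already the global sign of the native form and is invariant under $q\mapsto q^{-1}$.

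With the $q$-form in place, the substitution $q\mapsto q^{-1}$ finishes the argument. This substitution commutes with $\text{Log}$: since $\text{Log}(f)=\sum_{d\geq 1}\tfrac{\mu(d)}{d}\Psi_d(\log f)$ and each Adams operation $\Psi_d$ only rescales $q\mapsto q^d$, the ring automorphism $q\mapsto q^{-1}$ intertwines with every $\Psi_d$ and passes through Log unobstructed. Under $q\mapsto q^{-1}$ the prefactor $(q-1)$ becomes $(q^{-1}-1)$, each denominator transforms cleanly as $\prod_{j=1}^{v_i}(1-q^{-j})\mapsto\prod_{j=1}^{v_i}(1-q^{j})$ so as to match $\prod_{i=1}^n(1-q)\cdots(1-q^{v_i})$, and the numerator power of $q$ flips sign to yield $q^{\frac12(\gamma(\mathbf{v}(\mathbf{k}))+\delta(\mathbf{v}(\mathbf{k})))}$, while the sign $(-1)^{\delta(\mathbf{v}(\mathbf{k}))}$ is carried along unchanged on both sides. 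This is precisely the stated identity.

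The step I expect to be the main obstacle is the sign-and-power bookkeeping. Concretely, one must verify that $\tfrac12\tilde{\mathbf{v}}^t\tilde{C}_{\mathbf{k}}\tilde{\mathbf{v}}$, summed over all the auxiliary leg vertices with their decreasing dimensions, really assembles into the closed exponent $\tfrac12(\gamma+\delta)$ with no stray shift, and that the sign is correctly placed relative to the plethystic Log. The latter point is the delicate one: since inserting $(-1)^{\delta(\mathbf{v}(\mathbf{k}))}=\prod_i((-1)^{k_i})^{v_i}$ inside the Log amounts to rescaling $T_i\mapsto(-1)^{k_i}T_i$, and such a rescaling interacts nontrivially with the even Adams operators $\Psi_{2d}$, one must confirm that the sign genuinely factors out as claimed rather than mixing across degrees. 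Once this finite verification is carried out, the corollary follows immediately from \cite[Proposition 3.4]{HLRV2} and the main theorem recalled above.
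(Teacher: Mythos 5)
Your proposal is correct and follows essentially the same route as the paper, which offers no independent proof of this corollary: it is quoted verbatim from \cite{HLRV2}, Proposition 3.4, with the single new operation being the substitution $q\mapsto q^{-1}$, whose compatibility with $\mathrm{Log}$ and effect on each factor you verify correctly. The sign issue you flag as the main obstacle is not actually one here, since the placement of $(-1)^{\delta(\mathbf{v}(\mathbf{k}))}$ (inside the $\mathrm{Log}$ on one side, outside on the other) is inherited unchanged from the cited statement and is untouched by $q\mapsto q^{-1}$.
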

By Theorem 4.1, we obtain
\begin{align} \label{quivercoh}
\mathbb{H}_{1^{\mathbf{v}(\mathbf{k})}}^{s}(q^{-1})=\sum_{j}
\dim(H_c^{2j}(\mathcal{Q}_{\tilde{\mathbf{v}}(\mathbf{k})};\mathbb{C})^{W_{\mathbf{v}}})q^{d_{\tilde{\mathbf{v}}{(\mathbf{k})}}-j}.
\end{align}

Now, we can finish the proof of Theorem \ref{mainthm}.

\begin{proof}

For the framing $\tau\in \mathbb{Z}_{\leq 0}$, we take $k=1-\tau\in
\mathbb{Z}_{\geq 1}$.   Consider the one vertex quiver
$\Gamma=\bullet$, we construct a new quiver $\Gamma(k)$ with the
unique vertex attached with $k$ infinite legs as showed above.
Associate a dimension vector  $\tilde{n}(k)$ to quiver $\Gamma(k)$,
we have the quiver variety $\mathcal{Q}_{\tilde{n}(k)}$ by the
construction showed previously. Now combining formulae
(\ref{HLRVformulaspecial}) and (\ref{quivercoh}) together in this
special case,  by the variable change $x=q^{1/2}T$, we obtain

\begin{align}
&(q^{1/2}-q^{-1/2})\text{Log}\left(\sum_{n\geq
0}\frac{(-1)^{n(\tau-1)}q^{\frac{n(n-1)}{2}\tau+\frac{n^2}{2}}}{(1-q)(1-q^2)\cdots(1-q^n)}x^n\right)\\\nonumber
&=-\sum_{n\geq
0}\mathbb{H}^s_{1^{n(k)}}(q)^{1/2-n/2}(-1)^{(\tau-1)n}x^n\\\nonumber
&=-\sum_{n\geq 0}\sum_{j}
\dim(H_c^{2j}(\mathcal{Q}_{\tilde{n}(k)};\mathbb{C})^{S_{n}})q^{\frac{1-n}{2}+d_{\tilde{n}{(k)}}-j}
(-1)^{(\tau-1)n}x^n
\end{align}

Therefore,
\begin{align}
f_{n}^{\tau}(q)=-\sum_{j}
\dim(H_c^{2j}(\mathcal{Q}_{\tilde{n}(1-\tau)};\mathbb{C})^{S_{n}})q^{\frac{1-n}{2}+d_{\tilde{n}{(1-\tau)}}-j}
(-1)^{(\tau-1)n}
\end{align}

Comparing to the formula (\ref{fntau2}), we obtain the formulae
(\ref{mainformual1}) and (\ref{mainformual2}) in Theorem
\ref{mainthm}.

\end{proof}

\section{Deformed Rogers-Ramanujan identities} \label{section-RR}
In the above section, we have interpreted and proved the integrality
of Ooguri-Vafa invariants $N_{n,j}(\tau)$ for $\tau\in \mathbb{Z}$
and $\tau\leq 0$. It is natural to ask how about the case $\tau \geq
1$?

As shown in Section \ref{Section-top}, Conjecture
\ref{Mainconj-section} leads to the conjectural infinite product
formula (\ref{infiniteprod}).

In the following, we study carefully  for the formula
(\ref{infiniteprod}) in $\tau=1$. We find that if we let
$n_{m,k}:=(-1)^{m}N_{m,k}$, then $n_{m,k}$ will be nonnegative.
After some concrete computation by using Maple 13,  we propose
\begin{conjecture} \label{deformedRRConj}
For a fixed integer $m\geq 1$, there exist finite many positive
integers $n_{m,k}$,  such that
\begin{align} \label{deformed-r-r}
\sum_{n\geq 0}\frac{q^{n^2}}{(1-q)\cdots
(1-q^n)}(q^{-\frac{1}{2}}x)^n=\prod_{m\geq 1}\prod_{k\in
\mathbb{Z}}\prod_{l\geq
0}\left(1-q^{\frac{k+1}{2}+l}x^m\right)^{(-1)^m n_{m,k}}
\end{align}
In particular, when $x=q^{\frac{1}{2}}$ and $x=q^{\frac{3}{2}}$,
these integers $n_{m,k}$ together with the formula
(\ref{deformed-r-r}) yield the two Rogers-Ramanujan identities
(\ref{rr1}) and (\ref{rr2}).
\end{conjecture}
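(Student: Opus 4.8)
The plan is to separate three logically distinct claims and attack them in turn. The product identity (\ref{deformed-r-r}) is itself not the difficulty: it is precisely the $\tau=1$ case of (\ref{infiniteprod}), which holds as a formal identity in $\mathbb{Q}(q^{1/2})[[x]]$ by the very definition of the inverse pair $\mathrm{Exp},\mathrm{Log}$, once one writes $N_{m,k}(1)=(-1)^m n_{m,k}$. So the genuine content is that the coefficients
\begin{align*}
f_m^1(q)=\sum_k N_{m,k}(1)\,q^{k/2}
\end{align*}
are Laurent polynomials with \emph{integer} coefficients (integrality and finite support, i.e. Conjecture \ref{Mainconj-section} at $\tau=1$) obeying the sign rule $(-1)^m N_{m,k}(1)\ge 0$. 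I would reduce the whole statement to these arithmetic and positivity properties of $f_m^1$.

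For integrality and finiteness I would try to reproduce the mechanism of Theorem \ref{mainthm}, where for $\tau\le 0$ the $N_{n,k}(\tau)$ are signed dimensions of $S_n$-invariants in $H_c^{\ast}$ of the quiver variety $\mathcal{Q}_{\tilde n(1-\tau)}$ with $k=1-\tau\ge 1$ legs. At $\tau=1$ this forces $k=0$ legs, so the construction of Section \ref{Section-quiver} degenerates and cannot be used verbatim; moreover, formally continuing the sign $-(-1)^{(\tau-1)n}$ to $\tau=1$ would predict $N_{m,k}(1)\le 0$ for every $m$, contradicting the observed $N_{m,k}(1)\ge 0$ for even $m$. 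Hence a genuinely new, sign-twisted object is required. The plan is to seek a limiting model at zero legs --- a renormalized or determinant-twisted quiver variety, or the cohomological Hall algebra mentioned in the introduction --- whose (virtual) Poincar\'e series equals $f_m^1$, so that integrality (dimensions), finiteness (finite-dimensionality, matching the conjectural support $I_m$), and positivity (after twisting by the sign character to absorb $(-1)^m$) all follow at once. \textbf{This is the main obstacle}, and it is exactly the point at which the $\tau\ge 1$ case is open.

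The ``in particular'' specializations I expect to be essentially formal once finiteness is granted. Setting $x=q^{1/2}$ in the left side of (\ref{deformed-r-r}) yields $\sum_{n\ge 0}q^{n^2}/((1-q)\cdots(1-q^n))$ and $x=q^{3/2}$ yields $\sum_{n\ge 0}q^{n^2+n}/((1-q)\cdots(1-q^n))$, the left sides of (\ref{rr1}) and (\ref{rr2}). Both substitutions are legitimate $q$-adically: the coefficient of $x^m$ in (\ref{deformed-r-r}) has $q$-valuation of order $m^2$, while on the product side convergence uses the observed support bound $\min I_m=m+1$ for $m\ge 4$ (plus the three exceptional small levels), which forces each power of $q$ to receive only finitely many factors. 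Granting (\ref{deformed-r-r}) as a formal identity, the two specialized products therefore collapse to $\prod_{n}(1-q^{5n+1})^{-1}(1-q^{5n+4})^{-1}$ and $\prod_n(1-q^{5n+2})^{-1}(1-q^{5n+3})^{-1}$ by the classical Rogers-Ramanujan identities (\ref{rr1}), (\ref{rr2}).

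As an independent check, and a plausible route to determine the $n_{m,k}$ themselves, I would match exponents directly: at $x=q^{1/2}$ the factor indexed by $(m,k,l)$ contributes $q^{(k+m+1)/2+l}$ with multiplicity $(-1)^m n_{m,k}$, and summing these multiplicities over all $(m,k,l)$ with $k\in I_m$, $l\ge 0$ that give a fixed exponent $E$ must equal $-1$ when $E\equiv 1,4\pmod 5$ and $0$ otherwise (and the analogous statement with $1,4$ replaced by $2,3$ at $x=q^{3/2}$). Carrying this out turns the Rogers-Ramanujan specialization into a level-by-level finite bookkeeping identity, and, read backwards, imposes strong constraints that may identify the $n_{m,k}$ with multiplicities in a Rogers-Selberg/Andrews-Gordon type expansion, consistent with the reformulation suggested in the remark.
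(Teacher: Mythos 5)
You should first be clear that the paper does not prove this statement: it is a \emph{conjecture}, and the paper's entire support for it consists of (a) the formal derivation of the infinite product (\ref{infiniteprod}) from the Exp/Log formalism, (b) Maple computations of $g_m(q)=\sum_k n_{m,k}q^k$ for $1\le m\le 18$, and (c) the observation that the exponent sums $n_i=\sum_{m+k+1=2i}(-1)^m n_{m,k}$ and $r_i=\sum_{3m+k+1=2i}(-1)^m n_{m,k}$ empirically follow the mod-$5$ patterns that turn the specialized products into the Rogers--Ramanujan products. Your decomposition is the right one and matches the paper's implicit logic exactly: the product identity (\ref{deformed-r-r}) is the $\tau=1$ instance of (\ref{infiniteprod}) and is formal; the genuine content is Conjecture \ref{Mainconj-section} at $\tau=1$ (integrality and finite support of $f_m^1$) together with the sign rule $(-1)^mN_{m,k}(1)\ge 0$; and you are right that the quiver-variety mechanism of Theorem \ref{mainthm} degenerates at $1-\tau=0$ legs and that naively continuing the sign $-(-1)^{(\tau-1)n}$ to $\tau=1$ contradicts the observed positivity at even $m$, so a genuinely new model is needed. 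That is precisely the point the paper leaves open, and your proposal leaves it open too, so no proof has been produced on either side.

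One remark on the ``in particular'' clause. You argue in the direction ``grant (\ref{deformed-r-r}), then the classical identities (\ref{rr1}), (\ref{rr2}) force the specialized products to collapse to the RR products,'' whereas the paper runs the computation the other way: it evaluates the exponent of each factor $(1-q^j)$ in the specialized product from the computed $n_{m,k}$ and checks that the resulting pattern reproduces the RR product, so that the conjecture would \emph{yield} (\ref{rr1}) and (\ref{rr2}) rather than be checked against them. Your two directions are equivalent (two products $\prod_j(1-q^j)^{a_j}$ agree iff their exponents agree, by taking logarithms), and your final exponent-bookkeeping paragraph recovers the paper's $n_i$, $r_i$ computation, but be aware that as a consistency check your first formulation is weaker evidence for the conjecture than the paper's, since it assumes what the paper wants to deduce. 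Your convergence remarks (valuation of order $m^2$ on the sum side, the support bound $\min I_m=m+1$ on the product side) are correct and are needed for the substitutions $x=q^{1/2}$, $x=q^{3/2}$ to be legitimate; the paper glosses over this point.
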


Let us give some numerical checks for Conjecture
\ref{deformedRRConj}. We introduce the polynomial
\begin{align}
g_m(q)=\sum_{k\in \mathbb{Z}}n_{m,k}q^k,
\end{align}
By using Maple 13, we have computed the polynomial $g_m(q)$ for
$1\leq m\leq 18$. Here is a list for them when $m\leq 6$:
\begin{align*}
g_{1}(q)&=1 \\\nonumber g_2(q)&=q,\\\nonumber
g_3(q)&=q^4,\\\nonumber g_4(q)&=q^5+q^9,  \\\nonumber
g_5(q)&=q^6+q^8+q^{10}+q^{12}+q^{16},\\\nonumber
g_6(q)&=q^7+2q^9+q^{11}+3q^{13}+q^{15}+2q^{17}+q^{19}+q^{21}+q^{25}
\end{align*}
If we let $x=q^{\frac{1}{2}}$, identity (\ref{deformed-r-r}) becomes
\begin{align} \label{r-r-1}
\sum_{n\geq 0}\frac{q^{n^2}}{(1-q)\cdots (1-q^n)}=\prod_{m\geq
1}\prod_{k\geq 0}\prod_{l\geq
0}\left(1-q^{\frac{m+k+1}{2}+l}\right)^{(-1)^m
n_{m,k}}=\prod_{i}\prod_{l\geq 0}(1-q^{i+l})^{n_i}
\end{align}
where $n_i=\sum_{m+k+1=2i}(-1)^mn_{m,k}$, our computations imply
that: \makeatletter
\let\@@@alph\@alph
\def\@alph#1{\ifcase#1\or \or $'$\or $''$\fi}\makeatother
\begin{subnumcases}
{n_i=}  \nonumber -1, &$i=5k+1$ or $5k+4$, for $k\geq 0$\\\nonumber
1, & $i=5k+2$ or $5k+5$, for $k\geq 0$ \\ 0 \nonumber, & otherwise,
\end{subnumcases}
\makeatletter\let\@alph\@@@alph\makeatother It turns out formula
(\ref{r-r-1}) gives the first Rogers-Ramanujan identity (\ref{rr1}).

Similarly, letting $x=q^{\frac{3}{2}}$, identity
(\ref{deformed-r-r}) becomes
\begin{align} \label{r-r-2}
\sum_{n\geq 0}\frac{q^{n^2+n}}{(1-q)\cdots (1-q^n)}=\prod_{m\geq
1}\prod_{k\in \mathbb{Z}}\prod_{l\geq
0}\left(1-q^{\frac{3m+k+1}{2}+l}\right)^{(-1)^m
n_{m,k}}=\prod_{i}\prod_{l\geq 0}(1-q^{i+l})^{r_i}
\end{align}
where $r_i=\sum_{3m+k+1=2i}(-1)^mn_{m,k}$, we find that:
\makeatletter
\let\@@@alph\@alph
\def\@alph#1{\ifcase#1\or \or $'$\or $''$\fi}\makeatother
\begin{subnumcases}
{r_i=}  \nonumber -1, &$i=5k+2$, for $k\geq 0$\\\nonumber 1, &
$i=5k+4$, for $k\geq 0$ \\0,\nonumber & otherwise,
\end{subnumcases}
\makeatletter\let\@alph\@@@alph\makeatother Hence formula
(\ref{r-r-2}) gives the second Rogers-Ramanujan identity
(\ref{rr2}).

Formula (\ref{deformed-r-r}) in Conjecture 5.1 is a formula  of type
``infinite sum=infinite product".  We expect it could be interpreted
by the denominator formula for some kinds of root system
\cite{Kac2}.

These conjectural integers $n_{m,k}$ appearing in Conjecture
\ref{deformedRRConj} are important. Although we have not obtained an
explicit formula for them, we have an explicit formula for the value
of $\sum_{k}n_{m,k}$ for any $m\geq 1$.

First, recall the definition of  $f^{\tau}_{\mu}(q)$ in
(\ref{OVConjC3}), we have
\begin{align} \label{ZC3}
Z^{(\mathbb{C}^3,\mathcal{D}_\tau)}(\mathbf{x};q)=\text{Exp}\left(\frac{1}{q^{\frac{1}{2}}-q^{-\frac{1}{2}}}
\sum_{\mu\in
\mathcal{P}_+}f_{\mu}^{\tau}(q)s_{\mu}(\mathbf{x})\right).
\end{align}
By the formula (\ref{partion-freeenergy}) for the case of
$(\mathbb{C}^3,\mathcal{D}_\tau)$, the open string free energy is
given by
\begin{align} \label{FC3}
F^{(\mathbb{C}^3,\mathcal{D}_\tau)}(\mathbf{x})=\log
Z^{(\mathbb{C}^3,\mathcal{D}_\tau)}(\mathbf{x};q)=-\sum_{\mu\in
\mathcal{P}_+}\sum_{g\geq
0}\frac{\sqrt{-1}^{l(\mu)}}{|Aut(\mu)|}K_{\mu,g,\frac{|\mu|}{2}}^{(\mathbb{C}^3,\mathcal{D}_\tau)}g_s^{2g-2+l(\mu)}p_{\mu}(\mathbf{x}),
\end{align}
which is the generating function the Gromov-Witten invariants
$K_{\mu,g,\frac{|\mu|}{2}}^{(\mathbb{C}^3,\mathcal{D}_\tau)}$, and
where $q=e^{ig_s}$.

An explicit expression for
$K_{\mu,g,\frac{|\mu|}{2}}^{(\mathbb{C}^3,\mathcal{D}_\tau)}$ is
obtained in \cite{KL}  (cf. formula (15) in \cite{LZ}). In
particular, for $m\geq 1$ and $g=0$, we have
\begin{align}
K_{m,0,\frac{m}{2}}^{(\mathbb{C}^3,\mathcal{D}_\tau)}=\frac{(-1)^{m\tau}}{m^2}\binom{m(\tau+1)-1}{m-1}.
\end{align}

Then, we combine (\ref{ZC3}) and (\ref{FC3}) together, and consider
the specialization $\mathbf{x}=(x,0,0,...)$, it follows that
\begin{align} \label{GWOVformula}
\sum_{m\geq 1}\sum_{g\geq
0}K_{m,g,\frac{m}{2}}^{(\mathbb{C}^3,\mathcal{D}_\tau)}g_{s}^{2g}x^m=\sum_{d\geq
1}\frac{\sqrt{-1}g_s}{d(q^{\frac{d}{2}}-q^{-\frac{d}{2}})}\sum_{m\geq
1}\sum_{k}N_{m,k}(\tau)q^{\frac{dk}{2}}x^{dm}.
\end{align}

Taking the coefficients of $x^m$ in (\ref{GWOVformula}), and
considering the limit $g_s\rightarrow 0$, we obtain
\begin{align} \label{Km0}
K_{m,0,\frac{m}{2}}^{(\mathbb{C}^3,\mathcal{D}_\tau)}=\sum_{d|m}\frac{1}{d^2}\sum_{k}N_{m/d,k}(\tau),
\end{align}
where we have used
\begin{align}
\lim_{g_s\rightarrow
0}\frac{\sqrt{-1}g_s}{q^{\frac{d}{2}}-q^{-\frac{d}{2}}}=\lim_{g_s\rightarrow
0}\frac{\sqrt{-1}g_s}{e^{\sqrt{-1}g_s\frac{d}{2}}-e^{-\sqrt{-1}g_s\frac{d}{2}}}=\frac{1}{d}.
\end{align}

Finally, by  M\"obius inversion formula, we have
\begin{align}
f_m^{\tau}(1)=\sum_{k}N_{m,k}(\tau)=\sum_{d|m}\frac{\mu(\frac{m}{d})}{\left(\frac{m}{d}\right)^2}
K_{d,0,\frac{d}{2}}^{(\mathbb{C}^3,\mathcal{D}_\tau)},
\end{align}
where $\mu(d)$ is the M\"obius function.

Therefore, by the expression (\ref{Km0})  we obtain
\begin{proposition}
For $m\geq 1$, the value $f_m^{\tau}(1)$ of (\ref{integralityC3in})
at $q=1$ is given by
\begin{align}
f_m^{\tau}(1)=\frac{1}{m^2}\sum_{d|m}\mu(m/d)(-1)^{d\tau}\binom{d(\tau+1)-1}{d-1}.
\end{align}
\end{proposition}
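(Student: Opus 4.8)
The plan is to read off the value $f_m^\tau(1)$ by matching the two independent generating-function descriptions of the open free energy of $(\mathbb{C}^3,\mathcal{D}_\tau)$ and then inverting an arithmetic convolution. First I would equate the plethystic description (\ref{ZC3}) of the partition function with the Gromov-Witten description (\ref{FC3}) of $F=\log Z$, after specializing $\mathbf{x}=(x,0,0,\dots)$. Expanding the plethystic exponential by degree produces, in degree $d$, a factor $\frac{1}{d}\,(q^{d/2}-q^{-d/2})^{-1}$ multiplying $\sum_k N_{m,k}(\tau)q^{dk/2}$, so that matching the coefficient of $x^m$ against $\sum_{g\geq 0}K_{m,g,m/2}^{(\mathbb{C}^3,\mathcal{D}_\tau)}g_s^{2g}$ gives the master identity (\ref{GWOVformula}).

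Next I would specialize to $q=1$, i.e.\ let $g_s\to 0$ with $q=e^{\sqrt{-1}g_s}$. On the Gromov-Witten side only the genus-zero term survives, leaving $K_{m,0,m/2}^{(\mathbb{C}^3,\mathcal{D}_\tau)}$; on the Ooguri-Vafa side I would use $\lim_{g_s\to 0}\frac{\sqrt{-1}g_s}{q^{d/2}-q^{-d/2}}=\frac1d$ and $q^{dk/2}\to 1$, so that the degree-$d$ summand contributes $\frac{1}{d^2}\sum_k N_{m/d,k}(\tau)=\frac{1}{d^2}f_{m/d}^\tau(1)$, since $f_m^\tau(1)=\sum_k N_{m,k}(\tau)$. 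This produces the divisor-sum relation (\ref{Km0}),
\[
K_{m,0,\frac{m}{2}}^{(\mathbb{C}^3,\mathcal{D}_\tau)}=\sum_{d\mid m}\frac{1}{d^2}\,f_{m/d}^\tau(1).
\]
Clearing denominators recasts this as $m^2K_{m,0,m/2}^{(\mathbb{C}^3,\mathcal{D}_\tau)}=\sum_{d\mid m}d^2 f_d^\tau(1)$, a plain summatory relation, so Möbius inversion yields $f_m^\tau(1)=\sum_{d\mid m}\frac{\mu(m/d)}{(m/d)^2}K_{d,0,d/2}^{(\mathbb{C}^3,\mathcal{D}_\tau)}$.

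It then remains to substitute the closed form $K_{d,0,d/2}^{(\mathbb{C}^3,\mathcal{D}_\tau)}=\frac{(-1)^{d\tau}}{d^2}\binom{d(\tau+1)-1}{d-1}$, which I would import from \cite{KL} (cf.\ \cite{LZ}) rather than rederive, and to simplify the prefactor via $\frac{1}{(m/d)^2}\cdot\frac{1}{d^2}=\frac{1}{m^2}$; this gives the claimed formula. The genuinely nontrivial input is this evaluation of the genus-zero open Gromov-Witten invariant. Within the argument itself the only delicate step is justifying the termwise $g_s\to 0$ limit of the plethystic expansion and correctly tracking the degree-$d$ multicover factors $1/d$, since it is exactly their squares that produce the $1/d^2$ appearing in (\ref{Km0}); once these are in place, the Möbius inversion and final substitution are routine.
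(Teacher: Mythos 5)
Your proposal follows the paper's own argument essentially step for step: combining the plethystic form (\ref{ZC3}) with the Gromov--Witten expansion (\ref{FC3}) to get (\ref{GWOVformula}), taking the $g_s\to 0$ limit via $\lim_{g_s\to 0}\frac{\sqrt{-1}g_s}{q^{d/2}-q^{-d/2}}=\frac{1}{d}$ to obtain the divisor-sum relation (\ref{Km0}), applying M\"obius inversion, and importing the closed form of $K_{d,0,\frac{d}{2}}^{(\mathbb{C}^3,\mathcal{D}_\tau)}$ from \cite{KL}. The argument is correct and matches the paper's proof, including the identification of the genus-zero evaluation as the one genuinely external input.
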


In the following, we will prove that
\begin{theorem} \label{theoremintegrality}
For any $m\geq 1$ and $\tau\in \mathbb{Z}$,
\begin{align} \label{OVnumber}
\frac{1}{m^2}\sum_{d|m}\mu(m/d)(-1)^{d\tau}\binom{d(\tau+1)-1}{d-1}
\in \mathbb{Z}.
\end{align}
\end{theorem}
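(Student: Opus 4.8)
The plan is to exploit a reflection symmetry in the framing parameter $\tau$ that reduces the open case $\tau\ge1$ to the case $\tau\le0$ already settled in Section \ref{Section-quiver}. Write $b_d(\tau)=(-1)^{d\tau}\binom{d(\tau+1)-1}{d-1}$, so that the quantity in (\ref{OVnumber}) is $\frac{1}{m^2}\sum_{d\mid m}\mu(m/d)\,b_d(\tau)=f_m^{\tau}(1)$. First I would prove the pointwise identity $b_d(-\tau-1)=-b_d(\tau)$ for every $d\ge1$ and every $\tau\in\mathbb{Z}$. This is a one-line manipulation using the negative-upper-index rule $\binom{-n}{k}=(-1)^k\binom{n+k-1}{k}$: substituting $\tau\mapsto-\tau-1$ turns $\binom{d(-\tau)-1}{d-1}=\binom{-(d\tau+1)}{d-1}$ into $(-1)^{d-1}\binom{d(\tau+1)-1}{d-1}$, and tracking the prefactor $(-1)^{d(-\tau-1)}=(-1)^{d\tau+d}$ produces exactly one extra overall sign. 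Summing against $\mu(m/d)/m^2$ then gives the clean relation $f_m^{-\tau-1}(1)=-f_m^{\tau}(1)$ of closed-form expressions, valid term by term and irrespective of any integrality.

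With this symmetry in hand, the theorem follows from the $\tau\le0$ case. Indeed, Theorem \ref{mainthm} shows that for $\tau\le0$ the integers $N_{m,k}(\tau)$ are, up to sign, dimensions of $S_m$-invariant compactly supported cohomology of the quiver variety $\mathcal{Q}_{\tilde{m}(1-\tau)}$, with only finitely many nonzero; hence $f_m^{\tau}(1)=\sum_k N_{m,k}(\tau)$ is a finite sum of integers, so $f_m^{\tau}(1)\in\mathbb{Z}$ for all $\tau\le0$. Given any $\tau\ge1$, the reflected framing $\tau'=-\tau-1\le-2$ satisfies $\tau'\le0$, so $f_m^{\tau'}(1)\in\mathbb{Z}$, and the symmetry yields $f_m^{\tau}(1)=-f_m^{\tau'}(1)\in\mathbb{Z}$. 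Since every integer is either $\le0$ or $\ge1$, this disposes of all $\tau\in\mathbb{Z}$.

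If instead a self-contained arithmetic proof is wanted (not routing through quiver cohomology), I would argue prime by prime. Fixing a prime $p$ with $v_p(m)=a\ge1$ and writing $m=p^a n$ with $p\nmid n$, the vanishing of $\mu$ on non-squarefree arguments collapses the Möbius sum to $\sum_{\delta\mid n,\ n/\delta\ \mathrm{squarefree}}\mu(n/\delta)\big(b_{p^a\delta}(\tau)-b_{p^{a-1}\delta}(\tau)\big)$, so it suffices to show $v_p\big(b_{p^a\delta}(\tau)-b_{p^{a-1}\delta}(\tau)\big)\ge 2a$ for each $\delta$ with $p\nmid\delta$. Using $\binom{ds-1}{d-1}=\tfrac1s\binom{ds}{d}$ with $s=\tau+1$, this reduces (for odd $p$, where the signs $(-1)^{d\tau}$ agree across the two terms) to a Jacobsthal–Ljunggren-type super-congruence $\binom{p^a\delta s}{p^a\delta}\equiv\binom{p^{a-1}\delta s}{p^{a-1}\delta}\pmod{p^{3a}}$. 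The main obstacle is that dividing by $s$ erodes the gained $p$-adic valuation by $v_p(s)$, so the crude bound $3a-v_p(s)$ can drop below $2a$ precisely when $p\mid(\tau+1)$ and $a$ is small; dealing with this, together with the primes $p=2$ and $p=3$ where both the $p^{3a}$-congruence and the sign bookkeeping require adjustment, calls for a finer Kummer carry-count for $v_p\binom{p^a\delta s}{p^a\delta}$. I expect this valuation analysis at the bad primes to be the technical heart, and the symmetry-plus-geometry argument to be the cleaner route.
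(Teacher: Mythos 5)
Your first route is correct and genuinely different from the paper's. The reflection identity $b_d(-\tau-1)=-b_d(\tau)$ checks out (the generalized-binomial manipulation and the sign bookkeeping are both right), and it does reduce every $\tau\ge 1$ to some $\tau'\le -2$, where integrality of $f_m^{\tau'}(1)=\sum_k N_{m,k}(\tau')$ follows from Theorem \ref{mainthm} exactly as the paper itself argues for $\tau\le 0$. The paper instead handles $\tau>0$ by a self-contained $p$-adic argument: it introduces the $p$-removed factorial $f_p(n)=n!/(p^{[n/p]}[n/p]!)$, proves $p^{2\alpha}\mid f_p(p^\alpha n)-f_p(p^\alpha)^n$ (Lemma \ref{functionfp}), and deduces $p^{2\alpha}\mid b_n(\tau)-b_{n/p}(\tau)$ for $n=p^\alpha a$ (Lemma \ref{lemma2}), which combined with the same collapse of the M\"obius sum that you describe gives (\ref{OVnumber}) directly. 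Note that the paper's route sidesteps precisely the obstacle you flag in your second sketch: by expressing the ratio of the two binomials as $f_p((\tau+1)n)/(f_p(\tau n)f_p(n))$ rather than passing through $\binom{ds-1}{d-1}=\frac{1}{s}\binom{ds}{d}$, no division by $s=\tau+1$ ever occurs, so there is no valuation loss at primes dividing $\tau+1$ and no need for Jacobsthal--Ljunggren-type supercongruences; the only exceptional case is $p=2$, $\alpha=1$, handled by a short parity check. The trade-off is clear: your symmetry argument is shorter and conceptually pleasant but imports the full HLRV/quiver-variety machinery (and the derivation of the closed form for $f_m^\tau(1)$) into what is ultimately an elementary arithmetic statement, whereas the paper's argument for $\tau>0$ is purely number-theoretic. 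A genuine bonus of your observation is that it would also let the paper's arithmetic lemmas, proved for $\tau\ge 0$, cover all $\tau\in\mathbb{Z}$ without any appeal to geometry, since $\tau\le -1$ reflects to $-\tau-1\ge 0$.
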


For $\tau\leq 0$, we have shown the integrality of Ooguri-Vafa
invariant $N_{m,k}(\tau)$ in Theorem \ref{mainthm}, so we only need
to prove Theorem \ref{theoremintegrality} for the case of $\tau>0$
in the following.

 In the author's joint work with W. Luo \cite{LZ}, we develop
a systematic method to deal with the integrality of BPS numbers from
string theory. We can apply our method directly to prove Theorem
\ref{theoremintegrality}.

We define the following function, for nonnegative integer $n$ and
prime number $p$,
\begin{align}
    f_p(n)=\prod_{i=1,p\nmid i}^n i = \frac{n!}{p^{[n/p]}[n/p]!}.
\end{align}

\begin{lemma}[cf. Lemma 4.6 in \cite{LZ}]  \label{functionfp}
    For odd prime numbers $p$ and $\alpha\geq 1$ or for $p=2$, $\alpha\geq 2$,
    we have $p^{2\alpha}\mid f_p(p^{\alpha} n)-f_p(p^{\alpha})^n$. For $p=2, \alpha=1$, $f_2(2n)\equiv (-1)^{[n/2]}\pmod{4}$.
\end{lemma}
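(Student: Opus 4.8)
The plan is to exploit the multiplicative ``block'' structure of $f_p(p^\alpha n)$ together with a Wolstenholme-type vanishing of a sum of inverses. Writing out the definition, $f_p(p^\alpha n)=\prod_{k=0}^{n-1} B_k$, where $B_k=\prod_{1\le j\le p^\alpha,\,p\nmid j}(kp^\alpha+j)$ is the contribution of the $k$-th block $\{kp^\alpha+1,\dots,(k+1)p^\alpha\}$; here I use that $p\mid (kp^\alpha+j)$ if and only if $p\mid j$. Since $B_0=f_p(p^\alpha)$, it suffices to prove $B_k\equiv f_p(p^\alpha)\pmod{p^{2\alpha}}$ for every $k$, because then $f_p(p^\alpha n)\equiv f_p(p^\alpha)^n\pmod{p^{2\alpha}}$ follows at once by multiplying the congruences.

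To compare $B_k$ with $f_p(p^\alpha)$, I would note that each factor $j$ with $p\nmid j$ is a unit modulo $p^{2\alpha}$, so I may write $B_k=f_p(p^\alpha)\prod_{p\nmid j}\bigl(1+kp^\alpha j^{-1}\bigr)$. Expanding the product modulo $p^{2\alpha}$, every term containing two or more factors of $p^\alpha$ vanishes, leaving $B_k\equiv f_p(p^\alpha)\bigl(1+kp^\alpha S\bigr)\pmod{p^{2\alpha}}$, where $S=\sum_{1\le j\le p^\alpha,\,p\nmid j}j^{-1}$ is computed in $(\mathbb{Z}/p^{2\alpha}\mathbb{Z})^\times$. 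Thus the whole statement reduces to the claim $S\equiv 0\pmod{p^\alpha}$.

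The vanishing of $S$ is where the hypotheses on $p$ and $\alpha$ enter, and it is the crux of the argument. I would prove it via the involution $j\mapsto p^\alpha-j$ on the residues prime to $p$: pairing these gives $j^{-1}+(p^\alpha-j)^{-1}=\dfrac{p^\alpha}{j(p^\alpha-j)}\equiv 0\pmod{p^\alpha}$, since $j(p^\alpha-j)$ is a unit. For $p$ odd this involution is fixed-point-free on units (a fixed point would need $2j=p^\alpha$, impossible), and for $p=2,\ \alpha\ge 2$ the only would-be fixed point $j=2^{\alpha-1}$ is even, hence excluded; in both cases the units split into pairs and $S\equiv 0\pmod{p^\alpha}$, completing the congruence $p^{2\alpha}\mid f_p(p^\alpha n)-f_p(p^\alpha)^n$.

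The exceptional case $p=2,\ \alpha=1$ is precisely the one where the involution acquires the fixed point $j=1$, so $S$ need not vanish modulo $2$ and the clean congruence fails; here I would argue directly. Now $f_2(2n)=1\cdot 3\cdot 5\cdots(2n-1)$, and grouping consecutive odd factors gives $(4t+1)(4t+3)\equiv -1\pmod 4$. There are $[n/2]$ such complete pairs, and any leftover factor (present exactly when $n$ is odd) is the last odd number $2n-1\equiv 1\pmod 4$; hence $f_2(2n)\equiv(-1)^{[n/2]}\pmod 4$, as claimed. The main obstacle throughout is the bookkeeping over which residues are units and when the pairing involution is fixed-point-free, since this single point both drives the general congruence and isolates the anomalous $p=2,\ \alpha=1$ behaviour.
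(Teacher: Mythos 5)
Your proof is correct and follows essentially the same route as the paper's: both reduce the claim to the congruence $\sum_{1\le i\le p^\alpha,\,p\nmid i} i^{-1}\equiv 0\pmod{p^\alpha}$, proved by pairing $i$ with $p^\alpha-i$, with the hypotheses on $p,\alpha$ entering exactly to make that pairing fixed-point-free. The only differences are cosmetic: you treat all $n$ blocks at once where the paper peels off one block per induction step, and you write out the $p=2,\ \alpha=1$ computation that the paper dismisses as straightforward.
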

\begin{proof}
 With $\alpha\geq 2$ or $p>2$, $p^{\alpha-1}(p-1)$ is even, then
 \begin{align*}
     &f_p(p^\alpha n)-f_p(p^\alpha(n-1))f_p(p^\alpha) \\
     &= f_p(p^\alpha(n-1))\left(\prod_{i=1,p\nmid i}^{p^\alpha} (p^{\alpha}(n-1)+i)-f_p(p^\alpha)\right) \\
     & \equiv p^{\alpha}(n-1) f_p(p^\alpha(n-1))f_p(p^\alpha)\left(\sum_{i=1,p\nmid i}^{p^\alpha}\frac{1}{i}\right) \pmod{p^{2\alpha}} \\
     & \equiv  p^{\alpha}(n-1) f_p(p^\alpha(n-1))f_p(p^\alpha)\left(\sum_{i=1,p\nmid i}^{[p^\alpha/2]} (\frac{1}{i}+\frac{1}{p^{\alpha}-i})\right) \pmod{p^{2\alpha}} \\
     & \equiv  p^{\alpha}(n-1) f_p(p^\alpha(n-1))f_p(p^\alpha)\left(\sum_{i=1,p\nmid i}^{[p^\alpha/2]} \frac{p^{\alpha}}{i(p^\alpha-i)} \right)\equiv 0, \pmod{p^{2\alpha}}
\end{align*}

Thus the first part of the Lemma \ref{functionfp} is proved by
induction. For $p=2, \alpha=1$, the remain argument is
straightforward.
\end{proof}
\begin{lemma}
    For prime number $p$ and $n=p^\alpha a, p\nmid a, \alpha\geq 1, \tau\geq 0$, $p^{2\alpha}$ divides
    \begin{equation}
        (-1)^{\tau n}\binom{(\tau+1)n-1}{n-1}-(-1)^{\tau n/p}\binom{(\tau+1)n/p-1}{n/p-1}. \nonumber
    \end{equation}
    \label{lemma2}
\end{lemma}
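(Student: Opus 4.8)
The plan is to reduce the claimed divisibility to the multiplicative estimate for $f_p$ recorded in Lemma \ref{functionfp}. Write $B(n)=\binom{(\tau+1)n-1}{n-1}=\frac{((\tau+1)n-1)!}{(n-1)!\,(\tau n)!}$, so that the quantity in question is $(-1)^{\tau n}B(n)-(-1)^{\tau n/p}B(n/p)$. The case $\tau=0$ is trivial, since then $B\equiv 1$ and the difference vanishes; so I assume $\tau\geq 1$.

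The first step is to peel off one power of $p$ from each factorial. Since $p\mid n$, each of the arguments $(\tau+1)n$, $n$, $\tau n$ is divisible by $p$; using $(kn)!=f_p(kn)\,p^{kn/p}\,(kn/p)!$ and removing the top multiple of $p$ in the two factorials carrying a $-1$, a direct computation yields the clean recursion
\begin{align*}
B(n)=\frac{f_p((\tau+1)n)}{f_p(n)\,f_p(\tau n)}\,B(n/p),
\end{align*}
in which the powers of $p$ and the factor $\tau+1$ cancel exactly. This isolates all the $p$-adic content into the ratio of $f_p$-values, each of which is a $p$-adic unit.

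For $p$ odd with $\alpha\geq 1$, or $p=2$ with $\alpha\geq 2$, I apply Lemma \ref{functionfp} with $n=p^{\alpha}a$ to each factor, obtaining $f_p((\tau+1)n)\equiv f_p(p^{\alpha})^{(\tau+1)a}$, $f_p(n)\equiv f_p(p^{\alpha})^{a}$ and $f_p(\tau n)\equiv f_p(p^{\alpha})^{\tau a}$ modulo $p^{2\alpha}$. The exponents sum to zero, so the ratio is $\equiv 1\pmod{p^{2\alpha}}$ and hence $p^{2\alpha}\mid B(n)-B(n/p)$. Since $p-1$ is even in the odd case (resp.\ $n/p=2^{\alpha-1}a$ is still even when $p=2,\alpha\geq 2$), one has $\tau n\equiv \tau n/p\pmod 2$, so the two signs coincide and the stated divisibility follows.

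The one genuinely delicate case is $p=2$, $\alpha=1$, where $n=2a$ with $a$ odd and the two signs $(-1)^{\tau n}=1$ and $(-1)^{\tau n/2}=(-1)^{\tau a}$ need not agree. Here I instead invoke the refined congruence $f_2(2m)\equiv(-1)^{[m/2]}\pmod 4$ from Lemma \ref{functionfp}, which shows the ratio above is $\equiv(-1)^{e}\pmod 4$ with $e=[(\tau+1)a/2]-[a/2]-[\tau a/2]$. The difference $B(2a)-(-1)^{\tau a}B(a)$ is then a $2$-adic unit combination times $B(a)$, so it is divisible by $4$ precisely when $(-1)^{e}=(-1)^{\tau a}$, i.e.\ when $e\equiv \tau a\pmod 2$. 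I expect this parity identity to be the crux of the argument; it is verified by a short case split on the parity of $\tau$ (using that $a$ is odd), after which the remaining bookkeeping is routine.
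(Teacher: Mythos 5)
Your proposal is correct and follows essentially the same route as the paper: the same factorization of the difference as $\binom{(\tau+1)n/p-1}{n/p-1}$ times $\bigl(f_p((\tau+1)n)/(f_p(\tau n)f_p(n))-(-1)^{\tau(n-n/p)}\bigr)$, the same application of both parts of Lemma \ref{functionfp}, and the same reduction of the $p=2$, $\alpha=1$ case to a parity check on $\tau\bmod 2$. The only difference is presentational — you derive the $f_p$-ratio recursion explicitly and make the cancellation of exponents $(\tau+1)a-a-\tau a=0$ visible, which the paper leaves implicit.
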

\begin{proof}
By a straightforward computation, we have
    \begin{align}
        &(-1)^{\tau n}\binom{(\tau+1)n-1}{n-1}-(-1)^{\tau n/p}\binom{(\tau+1)n/p-1}{n/p-1} \nonumber \\
       &= (-1)^{\tau n}\binom{(\tau+1)n/p-1}{n/p-1}\left( \frac{f_p((\tau+1)n)}{f_p(\tau n)f_p(n)} -(-1)^{\tau(n-n/p)}\right). \label{eq2}
    \end{align}
    For $p>2$ or $p=2, \alpha>1$, then $n-n/p$ is even, thus (\ref{eq2}) is divisible by $p^{2\alpha}$ by Lemma~\ref{functionfp}.
    For $p=2, \alpha=1$, (\ref{eq2}) is divisible by 4 if
 \begin{equation}
        [\frac{(\tau+1)n}{4}]+[\frac{\tau n}{4}]+[\frac{n}{4}] -\tau(n-\frac{n}{2})\equiv 0, \pmod{2} \nonumber
    \end{equation}
    which depends only on $\tau\pmod{2}$, verify for $\tau\in \{0,1\}$ to get the results.
\end{proof}

Now, we can finish the proof of Theorem \ref{theoremintegrality}.
\begin{proof}
For $m\geq 1$, we write $m=p_1^{\alpha_1}p_2^{\alpha_2}\cdots
p_r^{\alpha_r}$, where each $\alpha_i\geq 1$ and $p_1,...,p_r$ are
$r$ distinct primes.

By the definition of M\"obius function, only when
$m/d=p_1^{\delta_1}p_2^{\delta_2}\cdots p_r^{\delta_r}$ for
$\delta_i\in \{0,1\}$, $\mu(m/d)$ is nonzero and
\begin{align}
\mu(p_1^{\delta_1}p_2^{\delta_2}\cdots
p_r^{\delta_r})=(-1)^{\sum_{i=1}^{r}\delta_i}.
\end{align}
Therefore,
\begin{align} \label{divisibleformula}
\sum_{d|m}\mu(m/d)(-1)^{d\tau}\binom{d(\tau+1)-1}{d-1}=\sum_{\delta_i\in
\{0,1\},1\leq i\leq r}(-1)^{\sum_{i=1}^{r}\delta_i}(-1)^{\tau
n_\delta}\binom{(\tau+1)n_\delta-1}{n_\delta-1},
\end{align}
where $n_\delta=p_1^{\alpha_1-\delta_1}p_2^{\alpha_2-\delta_2}\cdots
p_r^{\alpha_r-\delta_r}$. We need to show for any $1\leq i\leq r$,
(\ref{divisibleformula}) is divisible by $p_i^{2\alpha_i}$. Without
loss of generality, we only show that (\ref{divisibleformula}) is
divisible by $p_1^{2\alpha_1}$ in the following.

Indeed, (\ref{divisibleformula}) is equal to
\begin{align}
\sum_{\delta_j\in \{0,1\},j\geq
2}(-1)^{\delta_2+\cdots+\delta_r}\left((-1)^{\tau
n_{\delta'}}\binom{(\tau+1)n_{\delta'}-1}{n_{\delta'}-1}-(-1)^{\tau
n_{\delta'}/p_1}\binom{(\tau+1)n_{\delta'}/p_1-1}{n_{\delta'}/p_1-1}\right),
\end{align}
where $n_{\delta'}=p_1^{\alpha_1}p_2^{\alpha_2-\delta_2}\cdots
p_r^{\alpha_r-\delta_r}=p^{\alpha_1}a$. Therefore, by Lemma
\ref{lemma2}, the formula (\ref{divisibleformula}) is divisible by
$p_1^{2\alpha_1}$, hence we complete the proof of Theorem
\ref{theoremintegrality}.
\end{proof}

\textbf{Acknowledgements.} The author would like to thank Professor
Ole Warnaar for useful discussions \cite{Wa}, and showing him some
insights about the formula (\ref{deformed-r-r}).

$$ \ \ \ \ $$

\end{document}